\documentclass[12pt]{amsart}

\usepackage{geometry}
\usepackage[all]{xy}
\geometry{a4paper}                   % ... or a4paper or a5paper or ...

\usepackage{epstopdf}
\usepackage{amssymb,amsmath,amsfonts,amsthm,graphicx,enumerate,amscd}
\usepackage{mathrsfs}
\usepackage{slashed}
\usepackage{xcolor}
\usepackage{hyperref}

\newcommand{\deff}[1]{\textbf{\emph{\sharp1}}}

\newcommand{\func}[3]{\sharp1 \colon \sharp2 \to \sharp3}
\newcommand{\bb}[1]{\mathbb{\sharp1}}
\newcommand{\lie}[1]{\mathfrak{\sharp1}}
\newcommand{\iprod}[2]{\langle \sharp1, \sharp2 \rangle}
\newcommand{\ddell}[1]{\frac{\partial}{\partial \sharp1}}

\theoremstyle{plain}
\newtheorem{theorem}{Theorem}[section]

\newtheorem{corollary}[theorem]{Corollary}
\newtheorem{lemma}[theorem]{Lemma}
\newtheorem{proposition}[theorem]{Proposition}

\theoremstyle{definition}
\newtheorem{example}[theorem]{Example}
\newtheorem{remark}[theorem]{Remark}

\title[Adams operations on twisted K-theory]{Adams operations on the twisted K-theory of compact Lie groups}
\author{Chi-Kwong Fok}
\date{February 9, 2024}

\linespread{0.9}

\parskip     1.5mm
\allowdisplaybreaks
\begin{document}
\maketitle
\begin{abstract}
In this paper, extending the results in \cite{F}, we compute Adams operations on the twisted $K$-theory of connected, simply-connected and simple compact Lie groups $G$, in both equivariant and nonequivariant settings.  %Along the way, we also obtain the $K$-theory of $G$ with a twist corresponding to an integer less than the dual Coxeter number, thereby supplementing the Freed-Hopkins-Teleman Theorem.
\end{abstract}
\emph{Mathematics Subject Classification:} 19L50; 55S25
\tableofcontents
\section{Introduction}
Adams operations are important cohomological operations on $K$-theory. It was utilized by Adams in solving the famous problem of determining the parallelizability of spheres. According to a theorem of Bousfield (cf. \cite[Theorem 9.2]{Bou}), the $\nu_1$-homotopy group, which is a certain localization of the actual homotopy group, can be computed using Adams operations. Motivated by this result, the author of this paper gave explicit formulae for Adams operations on the $K$-theory of all compact classical Lie groups (cf. \cite{F}) in the hope of furthering research on $\nu_1$-homotopy groups of these Lie groups. On the other hand, Adams operations can also be defined similarly on twisted $K$-theory (cf. \cite{AS2}). Just like how Adams operations on the $K$-theory of classical groups can be used to deduce a universal description of Adams operations on $K$-theory in general in terms of homotopy classes of maps on the infinite unitary group, Adams operations on the classical groups in the twisted setting is expected to yield an analogous universal interpretation, which may illuminate the study of Adams operations on twisted $K$-theory. Thus the task of computing Adams operations on twisted $K$-theory of compact Lie groups then becomes natural. By the Freed-Hopkins-Teleman Theorem (cf. \cite{FHT3}), the equivariant twisted $K$-theory of a compact Lie group is isomorphic to its Verlinde algebra, which is distinctively different from its untwisted counterpart (the ring of Grothendieck differentials of the representation ring, cf. \cite{BZ}). Then the Verlinde algebras will provide a new example of algebraic objects equipped with Adams operations. Furthermore, the Verlinde algebra is the representation group of positive energy representations of the loop group of the compact Lie group, and it would be interesting to study how Adams operations act on these infinite dimensional representations in an appropriate sense. %or, in the very first place, how one should interpret appropriately Adams operations in this context. 
As a first step towards understanding those problems, we present computations of Adams operations on twisted $K$-theory of compact Lie groups in this paper. 

	Let $G$ be a simple, connected and simply-connected compact Lie group equipped with conjugation action by itself. Any equivariant $K$-theoretic twist on $G$ is classified by $H_G^3(G, \mathbb{Z})\cong\mathbb{Z}$. Let $\tau^n$ be the twist corresponding to $n$, and $\textsf{h}^\vee$ the dual Coxeter number of $G$. According to the Freed-Hopkins-Teleman Theorem (cf. \cite[Theorem 1]{FHT3}), for $n> 0$, the equivariant twisted $K$-theory $K_G^*(G, \tau^n)$ is isomorphic to the Verlinde algebra $R_{n-\textsf{h}^\vee}(G)$, which is the fusion ring for the level $n-\textsf{h}^\vee$ positive energy representation of the loop group $LG$. In Section \ref{equivtwist}, {we review equivariant twisted $K$-theory and the Freed-Hopkins-Teleman Theorem. We also work out in detail the restriction map $i^*: K_G^*(G, \tau^n)\to K_T^*(T, i^*\tau^n)$ to the equivariant twisted $K$-theory of the maximal torus $T$. }%we first compute the equivariant twisted $K$-theory of $G$ in the case when $n<\textsf{h}^\vee$, $n\neq 0$, which is left out by the Freed-Hopkins-Teleman Theorem and seems to be absent in the literature. 
%\begin{theorem}\label{zerok}
%	\begin{enumerate}
%		\item\label{vanishingk} If $|n|<\textsf{h}^\vee$ and $n\neq 0$, then $K_G^*(G, \tau^n)=0$.
%		\item\label{symm} For $n$ satisfying $|n|\geq\textsf{h}^\vee$, let $W_\mu\in K_G^*(G, \tau^n)$ be the $K$-theory class which is the image of the class of the irreducible $G$-representation $V_\mu\in K_G^*(\text{pt})$ with highest weight $\mu$ under the pushforward map $f_{G, *}: K_G^*(\text{pt})\to K_G^*(G, \tau^n)$. If $n\leq-\textsf{h}^\vee$, then $K_G^{\text{dim G}}(G, \tau^n)$ is isomorphic to $K_G^{\text{dim }G}(G, \tau^{|n|})$ and thus the Verlinde algebra $R_{|n|-\textsf{h}^\vee}(G)$ through the Adams operation $\psi^{-1}$, which sends $W_\mu$ to $(-1)^{\text{sgn}(w_0)}W_{\mu^*}$, where $w_0$ is the longest element in the Weyl group and $\mu^*=-w_0\cdot\mu$.%the pushforward map 
		%\[f_{G, *}: R(G)\cong K_G^*(\text{pt})\to K_G^{*+\text{dim }G}(G, \tau^n)\]
		%is a surjective ring homomorphism, with kernel being the level $|n|-\textsf{h}^\vee$ Verlinde ideal. In other words, $K_G^{\text{dim G}}(G, \tau^n)$ is isomorphic to the Verlinde algebra $R_{|n|-\textsf{h}^\vee}(G)$ and $K_G^{\text{dim }G+1}(G, \tau^n)=0$. 
	%\end{enumerate}
%\end{theorem}
%When $n\geq\textsf{h}^\vee$, $W_\mu$ can be identified through the isomorphism in the Freed-Hopkins-Teleman Theorem with the irreducible positive energy representation of $LG$ with highest weight $\mu$. Note that in general the Adams operation $\psi^\ell$ is a map from $K_G^*(G, \tau^n)$ to $K_G^*(G, \tau^{\ell n})$. 
In Section \ref{adamsequivcase}, based on the `splitting principle' for twisted $K$-theory (Corollary \ref{antisym}) provided by the restriction map $i^*$ and an analysis of the equivariant twisted $K$-theory of tori, we obtain Adams operations on equivariant twisted $K$-theory of $G$ below.
\begin{theorem}\label{adamsequiv}
	Let $\ell$ be an integer, $W_\mu\in K_G^*(G, \tau^n)$ the $K$-theory class which is the image of the class of the irreducible $G$-representation $V_\mu\in K_G^*(\text{pt})$ with highest weight $\mu$ under the pushforward map $f_{G, *}: K_G^*(\text{pt})\to K_G^*(G, \tau^n)$ induced by the inclusion of the identity element into $G$, $w_0$ the longest element in the Weyl group of $G$ and $\mu^*:=-w_0\mu$. Denote the half sum of positive roots of $G$ by $\rho$. The Adams operation
	\[\psi^\ell: K_G^*(G, \tau^{n})\to K_G^*(G, \tau^{\ell n})\]
	satisfies 
	\[\psi^\ell(W_\mu)=\begin{cases}W_{\ell\mu+(\ell-1)\rho},&\ \text{if }\ell>0\\ (-1)^{\text{sgn}(w_0)}W_{-\ell\mu^*+(-\ell-1)\rho},&\ \text{if }\ell<0\\ 0,&\ \text{if }\ell=0\end{cases}\]
	for $\mu$ being a level $|n|-\textsf{h}^\vee$ weight and $|n|\geq \textsf{h}^\vee$. It is the zero map if $|n|< \textsf{h}^\vee$ and $n\neq 0$. 
\end{theorem}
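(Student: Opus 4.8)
The plan is to reduce the statement to a computation on the maximal torus via the restriction map $i^{*}$, using naturality of Adams operations; the torus computation is where essentially all of the content lies. I would first dispose of the degenerate range: if $0<|n|<\textsf{h}^\vee$, then the Freed--Hopkins--Teleman theorem identifies $K_{G}^{*}(G,\tau^{n})$ with the Verlinde algebra $R_{|n|-\textsf{h}^\vee}(G)=0$ (there being no positive energy representations of $LG$ of negative level), so $\psi^{\ell}$ is vacuously zero there; from now on $|n|\geq\textsf{h}^\vee$, so that the classes $W_{\mu}$ with $\mu$ a level $|n|-\textsf{h}^\vee$ weight form a $\mathbb{Z}$-basis of $K_{G}^{*}(G,\tau^{n})$. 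For a dominant weight $\lambda$ write $A_{\lambda}=\sum_{w}(-1)^{\mathrm{sgn}(w)}e^{w\lambda}$ for the associated Weyl antisymmetrization. The structural inputs I would draw on, all established in Sections~\ref{equivtwist} and~\ref{adamsequivcase}, are: (i) the restriction $i^{*}\colon K_{G}^{*}(G,\tau^{n})\to K_{T}^{*}(T,i^{*}\tau^{n})$ is injective with image the Weyl anti-invariants (Corollary~\ref{antisym}, the ``splitting principle''); (ii) an explicit formula presenting $i^{*}(W_{\mu})$ as $A_{\mu+\rho}$ times the natural twisted fundamental class of $K_{T}^{*}(T,i^{*}\tau^{n})$, obtained by writing $W_{\mu}=f_{G,*}(V_{\mu})$, factoring the pushforward through the inclusion of $T$, and combining the projection formula, the Weyl character formula $\chi_{\mu}=A_{\mu+\rho}/A_{\rho}$, and the Weyl denominator identity for the $K$-theoretic Euler class of the normal bundle of $T$ in $G$; and (iii) the structure of the equivariant twisted $K$-theory of a torus together with the formula for $\psi^{\ell}$ on it.

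Granting these, the argument is short. Adams operations are natural under pullback of twisted $K$-theory, and $i^{*}\tau^{\ell n}=\ell\cdot i^{*}\tau^{n}$, so $i^{*}\circ\psi^{\ell}=\psi^{\ell}\circ i^{*}$; by the injectivity in (i) it suffices to compute $\psi^{\ell}(i^{*}W_{\mu})$ in $K_{T}^{*}(T,i^{*}\tau^{\ell n})$ and to recognise it as $i^{*}$ of the asserted class. By (iii), the Adams operation on the torus rescales the weight carried by the antisymmetrization: it sends $A_{\mu+\rho}$ (over the fundamental class at level $n$) to $A_{\ell(\mu+\rho)}$ (over the fundamental class at level $\ell n$), each exponent $e^{w(\mu+\rho)}$ being rescaled to $e^{\ell w(\mu+\rho)}$ while the fundamental class changes compatibly with the level change $n\mapsto\ell n$. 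For $\ell>0$ the weight $\ell(\mu+\rho)$ is regular dominant and equals $(\ell\mu+(\ell-1)\rho)+\rho$, so $i^{*}\psi^{\ell}(W_{\mu})=i^{*}W_{\ell\mu+(\ell-1)\rho}$, which gives the first case by injectivity. For $\ell<0$ the weight $\ell(\mu+\rho)=-|\ell|(\mu+\rho)$ is regular anti-dominant; using $A_{v\lambda}=(-1)^{\mathrm{sgn}(v)}A_{\lambda}$ with $v=w_{0}$ together with $-w_{0}\rho=\rho$ and $-w_{0}\mu=\mu^{*}$ rewrites $A_{\ell(\mu+\rho)}$ as $(-1)^{\mathrm{sgn}(w_{0})}A_{|\ell|(\mu^{*}+\rho)}$, and $|\ell|(\mu^{*}+\rho)=(-\ell\mu^{*}+(-\ell-1)\rho)+\rho$, giving the second case. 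For $\ell=0$, $A_{0}=\sum_{w}(-1)^{\mathrm{sgn}(w)}=0$ since $|W|\geq 2$ (equivalently, $\psi^{0}$ annihilates the twisted fundamental class, which is a Thom class of positive rank), giving the third case. Lastly I would check, by pairing against the highest coroot, that $\ell\mu+(\ell-1)\rho$ (resp.\ $-\ell\mu^{*}+(-\ell-1)\rho$) is again a level $|\ell n|-\textsf{h}^\vee$ weight whenever $\mu$ is a level $|n|-\textsf{h}^\vee$ weight, so that the output is a genuine basis element rather than something annihilated by a Verlinde relation.

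The hard part is input (iii): determining precisely how $\psi^{\ell}$ acts on $K_{T}^{*}(T,i^{*}\tau^{n})$. An element of this group is a product of a class in $R(T)$ with a twisted fundamental class --- a Thom-type class for the inclusion of the identity into $T$ --- and $\psi^{\ell}$ multiplies that fundamental class by the cannibalistic class of the relevant $i^{*}\tau^{n}$-twisted $K$-orientation while simultaneously changing the twist level from $n$ to $\ell n$. The delicate point is to show that the cannibalistic contribution together with this level change amounts to exactly the bare rescaling of exponents used above, with no surviving residual factor. It is precisely the $\rho$-shift --- present both in the formula for $i^{*}(W_{\mu})$ and in the identification of $K_{T}^{*}(T,i^{*}\tau^{n})$ --- that produces the shifts $(\ell-1)\rho$ and $(-\ell-1)\rho$ and the sign $(-1)^{\mathrm{sgn}(w_{0})}$ in the final formula, so it must be tracked with care. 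The remaining verifications --- the alcove/level conditions and the degenerate cases $\ell=0$ and $0<|n|<\textsf{h}^\vee$ --- are routine.
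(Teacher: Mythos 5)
Your proposal is correct and follows essentially the same route as the paper: restrict along the injective map $i^*$ to the maximal torus, use the formula $i^*(W_\mu)=\left[(-1)^{|R_+|}J(e^{\mu+\rho})\right]$ (Corollary \ref{antisym}) together with the torus Adams operation $\psi^\ell([e^\chi])=[e^{\ell\chi}]$ (Corollary \ref{adamsT}), and conclude by injectivity, with your direct treatment of all $\ell<0$ via $J(e^{w_0\lambda})=(-1)^{\mathrm{sgn}(w_0)}J(e^{\lambda})$ being equivalent to the paper's factorization $\psi^\ell=\psi^{-1}\circ\psi^{-\ell}$. The only cosmetic difference is the degenerate range $0<|n|<\textsf{h}^\vee$ with $n<0$, where the paper passes through the conjugation isomorphism $K_G^*(G,\tau^n)\cong K_G^*(G,\tau^{-n})$ (Corollary \ref{zerok}) rather than citing Freed--Hopkins--Teleman directly for negative twists, which is immaterial.
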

The Adams operation $\psi^{-1}$ is in general an involutive isomorphism induced by complex conjugation. Theorem \ref{adamsequiv} in particular gives the explicit isomorphism $\psi^{-1}: K_G^*(G, \tau^n)\to K_G^*(G, \tau^{-n})$ and from this we can show that for $n<0$ $K_G^*(G, \tau^{n})$ as a ring is isomorphic to the Verlinde algebra $R_{|n|-\textsf{h}^\vee}(G)$ (Corollary \ref{zerok}). It is also noteworthy that the above formulae for Adams operations on equivariant twisted $K$-theory appear to be simple in comparison with the complicated combinatorial formulae for the untwisted case (cf. \cite{F}). Moreover, the Adams operation $\psi^0$ is the augmentation map on untwisted $K$-theory and representation rings, whereas it becomes the zero map in the twisted case. 

Adams operations on the nonequivariant twisted $K$-theory turns out to be more subtle. We first review in Section \ref{nonequivtwist} the derivation in \cite{Bra} of $K^*(G, \tau^n)$, which when $|n|\geq\textsf{h}^\vee$ is isomorphic to the exterior algebra on $\text{rank }G-1$ primitive elements over the coefficient group $\mathbb{Z}/c(G, n)$, where $c(G, n)$ is a certain integer invariant of $G$ which can be found in \cite[Equation 3.20 and Table 1]{Bra} or \cite[Theorem 1.2]{D}. We show in Section \ref{nonequivadams} the following formula for Adams operations on the coefficient group.
\begin{theorem}\label{adamsnonequiv}
	Let $k$ be an element in the coefficient group $\mathbb{Z}/c(G, n)$ of $K^*(G, \tau^n)$ and $R_+$ the set of positive roots of $G$. Then 
	\[\psi^\ell(k)=\begin{cases}\ell^{|R_+|}k,&\ \text{if }\ell\geq0\\ (-1)^{\text{sgn}(w_0)+|R_+|}\ell^{|R_+|}k,&\ \text{if }\ell<0\end{cases}\ (\text{mod }c(G, \ell n))\]
	in the coefficient group.
\end{theorem}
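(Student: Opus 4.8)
The plan is to obtain Theorem~\ref{adamsnonequiv} from the equivariant computation in Theorem~\ref{adamsequiv} by pushing it forward along the forgetful map and then substituting the Weyl dimension formula. Write $r\colon K_G^*(G,\tau^n)\to K^*(G,\tau^n)$ for the map forgetting $G$-equivariance (on twists, $\tau^n$ on the right denotes the image of the equivariant twist under $H_G^3(G,\mathbb{Z})\to H^3(G,\mathbb{Z})$); it is the pullback along the inclusion of a fibre $G\hookrightarrow G\times_G EG$, hence is multiplicative, is a module homomorphism over $r\colon K_G^*(G)\to K^*(G)$, and---by the naturality of Adams operations on twisted $K$-theory (cf. \cite{AS2})---satisfies $r\circ\psi^\ell=\psi^\ell\circ r$. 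Since $\psi^\ell$ is additive and $\mathbb{Z}_{c(G,n)}$ is cyclic, it suffices to evaluate $\psi^\ell$ on one generator, and I will use the image of the trivial representation.

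The first step is to determine the effect of $r$ on the classes $W_\mu$. Writing $f_G\colon\text{pt}\hookrightarrow G$ for the inclusion of the identity, so that $W_\mu=f_{G,*}([V_\mu])$, and using that $f_G^*\colon K_G^*(G)\to R(G)$ is split surjective (split by the inclusion $\text{res}\colon R(G)\to K_G^0(G)$ of constant families), the projection formula for the twisted Gysin map gives $f_{G,*}([V_\mu])=\text{res}([V_\mu])\cdot f_{G,*}(1)$ in the $K_G^*(G)$-module $K_G^*(G,\tau^n)$. Applying $r$ and noting that a constant family of virtual $G$-representations of total dimension $d$ forgets to $d$ times the unit of $K^0(G)$, one obtains
\[
r(W_\mu)=(\dim V_\mu)\cdot k_0,\qquad k_0:=r(W_0)=r\bigl(f_{G,*}(1)\bigr);
\]
and by the computation of $K^*(G,\tau^n)$ in \cite{Bra} the element $k_0$ generates the coefficient group $\mathbb{Z}_{c(G,n)}$ (and likewise the corresponding element generates $\mathbb{Z}_{c(G,\ell n)}$, since $|n|\ge\textsf{h}^\vee$ forces $|\ell n|\ge\textsf{h}^\vee$ for $\ell\ne 0$). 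The second step combines this with Theorem~\ref{adamsequiv} applied to $\mu=0$. For $\ell>0$ that theorem gives $\psi^\ell(W_0)=W_{(\ell-1)\rho}$, so that
\[
\psi^\ell(k_0)=r\bigl(\psi^\ell(W_0)\bigr)=(\dim V_{(\ell-1)\rho})\,k_0=\Bigl(\prod_{\alpha\in R_+}\frac{\langle(\ell-1)\rho+\rho,\alpha^\vee\rangle}{\langle\rho,\alpha^\vee\rangle}\Bigr)k_0=\ell^{|R_+|}k_0
\]
by the Weyl dimension formula; for $\ell<0$ one has $\mu^*=-w_0\cdot0=0$, so $\psi^\ell(W_0)=(-1)^{\text{sgn}(w_0)}W_{(-\ell-1)\rho}$ and the same computation gives $\dim V_{(-\ell-1)\rho}=(-\ell)^{|R_+|}=(-1)^{|R_+|}\ell^{|R_+|}$, whence $\psi^\ell(k_0)=(-1)^{\text{sgn}(w_0)+|R_+|}\ell^{|R_+|}k_0$; and for $\ell=0$ one checks directly that $\psi^0=0$ on $K^*(G,\tau^n)$ (it factors through $K^0(\text{pt})=\mathbb{Z}$, which receives the torsion group $\mathbb{Z}_{c(G,n)}$ as $0$), in agreement with $0^{|R_+|}=0$ since $|R_+|\ge1$. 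Along the way one should note that the weights $(\ell-1)\rho$ and $(-\ell-1)\rho$ really do have level at most $|\ell n|-\textsf{h}^\vee$, using $\langle\rho,\theta^\vee\rangle=\textsf{h}^\vee-1$ for $\theta$ the highest root together with $|n|\ge\textsf{h}^\vee$, so that the relevant classes are defined.

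The arithmetic in the last step is routine; the part needing care, and the expected main obstacle, is establishing the structural inputs: that $r$ commutes with the twisted Adams operations and obeys the projection formula for the twisted Gysin map, and that $k_0=r(W_0)$ is genuinely a generator of $\mathbb{Z}_{c(G,n)}$ (this last point requiring one to match conventions with \cite{Bra}). None of these is deep, but each has to be verified against the relevant definitions. One should also confirm that the degenerate cases---$\ell=0$, and $0<|n|<\textsf{h}^\vee$, where the equivariant theory and, one expects, the coefficient group on the nonequivariant side are trivial---are covered by the statement.
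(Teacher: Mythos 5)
Your proposal is correct and takes essentially the same route as the paper: commute $\psi^\ell$ with the forgetful map, use $h(W_\mu)=\dim V_\mu$ times the generator of the coefficient group (this is exactly the paper's Proposition \ref{forgetful}, which you re-derive via the projection formula rather than cite), apply Theorem \ref{adamsequiv} to $W_0$, and finish with the Weyl dimension formula. The only cosmetic difference is that for $\ell<0$ you apply the negative-$\ell$ formula of Theorem \ref{adamsequiv} directly, whereas the paper factors $\psi^\ell=\psi^{-1}\circ\psi^{-\ell}$ and invokes Corollary \ref{zerok}.
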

We also give an algorithm to compute Adams operations on (wedge product of) primitive generators. It basically involves `resolving' the equivariant Adams operation as in Theorem \ref{adamsequiv} along the Koszul resolution of Verlinde algebras and applying the augmentation map. We illustrate this algorithm with the example $G=SU(3)$ which, though a low-rank example, hopefully gives the reader a sense of subtlety in the description of Adams operations stemming from number theory.% as well as the sign of $\ell$.

Unless otherwise specified, in the rest of this paper, $G$ is assumed to be a simple, connected and simply-connected compact Lie group.

\textbf{Acknowledgement.} We would like to thank the anonymous referees for the critical comments and especially the suggestions for
improving the exposition and simplifying the proof of Corollary \ref{zerok}. We acknowledge support from the School of Mathematics and Physics Research Grant SRG2324-06 of the Xi'an Jiaotong-Liverpool University.

\section{Equivariant twisted $K$-theory}\label{equivtwist}
\subsection{Review of equivariant twisted $K$-theory}
 In what follows we give a brief review of equivariant twisted $K$-theory. The reader is referred to \cite{AS, M} for more details. Let $H$ be a compact Lie group and $X$ a finite $H$-CW complex. A model for an equivariant twist over $X$, i.e. the local coefficients of the equivariant $K$-theory of $X$, is an $H$-equivariant fiber bundle $P$ whose fibers are projective Hilbert space $\mathbb{P}(\mathcal{H})$, and which is stable in the sense that $P$ is isomorphic to the projective Hilbert space bundle obtained from $P$ by replacing $\mathcal{H}$ with $\mathcal{H}\otimes L^2(H)$ fiberwise. One can associate $P$ with the fiber bundle $\text{Fred}(P)$ of space of Fredholm operators on $\mathcal{H}$ as the structure group $PU(\mathcal{H})$ of $P$ acts on $\text{Fred}(\mathcal{H})$ naturally by conjugation. The equivariant twisted $K$-theory $K_H^0(X, P)$ is defined to be 
 \[[X, \text{Fred}(P)]_H,\]
 the space of homotopy classes of equivariant sections of $\text{Fred}(P)$. When $P$ is trivial, $[X, \text{Fred}(P)]_H$ is the space of homotopy classes of equivariant maps from $X$ to $\text{Fred}(\mathcal{H})$, which gives the untwisted equivariant $K$-theory $K_H^0(X)$. The degree 1 piece $K^1_G(X, P)$ then is defined by suspending $X$ and $P$.
 
 We shall note that another model for twists is equivariant Dixmier-Douady bundles, which are equivariant $\mathcal{K}(\mathcal{H})$-fiber bundles, where $\mathcal{K}(\mathcal{H})$ is the space of compact operators on $\mathcal{H}$. Dixmier-Douady bundles and projective Hilbert space bundles are similar in the sense that both have $PU(\mathcal{H})$ as their structure group. If Dixmier-Douady bundles are used as a model for twists, then equivariant twisted $K$-theory is defined as the $K$-theory of the $C^*$-algebra of the space of sections of the Dixmier-Douady bundle. In what follows, we will use Dixmier-Douady bundles to interpret twists because the orientation twists, which will be needed later, can be more naturally described by Clifford bundles, an example of Dixmier-Douady bundles. This switch to Dixmier-Douady bundles in our exposition can be done without mentioning the $K$-theory of $C^*$-algebras.
 
 Two equivariant Dixmier-Douady bundles $\mathcal{E}_1$ and $\mathcal{E}_2$ are said to be Morita isomorphic if there exists an equivariant Hilbert space bundle $\mathcal{F}\to X$ such that $\mathcal{E}_1\otimes \mathcal{E}_2^*\cong\mathcal{K}(\mathcal{F})$. In this case the Morita isomorphism is said to be trivialized by $\mathcal{F}$. The equivariant twists are classified, up to Morita isomorphism, by $H_H^3(X, \mathbb{Z})$. The tensor product of these bundles corresponds to addition in the cohomology group $H_H^3(X, \mathbb{Z})$. Trivializations of a Morita isomorphism by $\mathcal{F}_1$ and $\mathcal{F}_2$ are said to be equivalent if $\mathcal{F}_1\cong\mathcal{F}_2$. The set of equivalence classes of trivializations of a Morita isomorphism are acted upon transitively by the equivariant Picard group $\text{Pic}_H(X)$ by $[L]\cdot[\mathcal{F}]=[L\otimes\mathcal{F}]$. 
 
Let $X_1$ and $X_2$ be orientable closed differentiable $H$-manifolds and $f: X_1\to X_2$ an equivariant map. The orientation twist $o_i$ over $X_i$ can be represented by the complex Clifford bundle $\mathbb{C}\text{l}(TX_i)$. Its Morita isomorphism class is $\beta w_2^H(X_i)$, where $\beta: H_H^2(X_i, \mathbb{Z}_2)\to H_H^3(X_i, \mathbb{Z})$ is the Bockstein homomorphism and $w_2^H$ is the second equivariant Stiefel-Whitney class. If $\mathcal{E}_1\otimes o_1$ is Morita isomorphic to $f^*(\mathcal{E}_2\otimes o_2)$ and the Morita isomorphism is trivialized by $\mathcal{F}$, then one can define a pushforward map 
\[(f, \mathcal{F})_*: K_H^*(X_1, \mathcal{E}_1)\to K_H^{*+\text{dim}(X_2)-\text{dim}(X_1)}(X_2, \mathcal{E}_2)\]
satisfying $(f, \mathcal{F}\otimes L)_*(a)=(f, \mathcal{F})_*(a\cdot[L])$ for an equivariant line bundle $L\to X_1$. If in addition $H_H^2(X_1, \mathbb{Z})=0$, then there is only one equivalence class of trivializations of the Morita isomorphism and so the pushforward $f_*$ is well-defined. If $\mathcal{E}_1$ and $\mathcal{E}_2$ are Morita isomorphic Dixmier-Douady bundles over $X$ and the isomorphism is trivialized by $\mathcal{F}$, then 
\[(\text{Id}_X, \mathcal{F})_*: K_H^*(X, \mathcal{E}_1)\to K_H^*(X, \mathcal{E}_2)\]
is an isomorphism. Thus the isomorphism class of a twisted $K$-theory group only depends on the Morita isomorphism class of the twist. From now on, we will just denote the twist in any twisted $K$-theory group by its Morita isomorphism class $\tau$ regarded as an equivariant third cohomology class, instead of an explicit Dixmier-Douady bundle.

Let $\alpha\in H^2_H(X_1, \mathbb{Z})$, $\text{dim}(X_1)+\text{dim}(X_2)=2n$ and $f: X_1\to X_2$ be any equivariant differentiable $\alpha$-map, i.e. $\alpha\equiv w^H_2(X_1)-f^*w^H_2(X_2) (\text{mod }2)$. Then the Morita isomorphism class of the Clifford bundle $\mathbb{C}\text{l}(TX_1\oplus f^*TX_2)$ is $0\in H_H^3(X_1, \mathbb{Z})$. This is equivalent to saying that the vector bundle $TX_1\oplus f^*TX_2$ is equivariantly $\text{Spin}^c$. Let $P_{SO(2n)}$ be the orthonormal frame bundle of $TX_1\oplus f^*TX_2$ which is an equivariant principal $SO(2n)$-bundle, and $C_\alpha$ an equivariant circle bundle over $X_1$ whose Chern class is $\alpha$. Form the external tensor product $P_{SO(2n)}\widehat{\otimes}C_\alpha$ which is a principal $SO(2n)\times U(1)$-bundle. Then the equivariant $\text{Spin}^c$ structure of $TX_1\oplus f^*TX_2$ corresponding to $\alpha$ is given by an equivariant principal $\text{Spin}^c(2n)$-bundle $P_{\text{Spin}^c(2n)}^\alpha$ which double covers $P_{SO(2n)}\widehat{\otimes}C_\alpha$. The associated spinor bundle
\[\mathcal{S}_\alpha:=P^\alpha_{\text{Spin}^c(2n)}\times_{\text{Spin}^c(2n)}\textbf{S}, \]
 where $\textbf{S}$ is the spin representation of $\text{Spin}^c(2n)$, Morita trivializes $\mathbb{C}\text{l}(TX_1\oplus f^*TX_2)$ as $\text{End}(\mathcal{S}_\alpha)\cong\mathbb{C}\text{l}(TX_1\oplus f^*TX_2)\cong o_1\otimes f^*o_2$, and gives rise to the $K$-theoretic pushforward $(f, \mathcal{S}_\alpha)_*: K^*_H(X_1, f^*\tau)\to K^*_H(X_2, \tau)$. For convenience we will denote $(f, \mathcal{S}_\alpha)_*$ by $f_*^\alpha$. Let $L_{\alpha'}$ be an equivariant complex line bundle over $X_1$ whose first Chern class is $\alpha'$. Then $\mathcal{S}_\alpha\otimes L_{\alpha'}$, another spinor bundle obtained by scaling the $\text{Spin}^c(2n)$-valued transition functions of $P^\alpha_{\text{Spin}^c(2n)}$ by those of $L_{\alpha'}$, is $\mathcal{S}_{\alpha+2\alpha'}$. This can be seen from the the 2-to-1 homomorphism between the structure groups
 \begin{align*}
 	\text{Spin}^c(2n):=\text{Spin}(2n)\times U(1)/\mathbb{Z}_2&\to SO(2n)\times U(1)\\
	[(g, z)]&\mapsto (\pi(g), z^2),
 \end{align*}
 where $\pi$ is the 2-to-1 homomorphism from $\text{Spin}(2n)$ to $SO(2n)$. It follows that 
	\[f_*^{\alpha+2\alpha'}(a)=f_*^\alpha(a\cdot[L_{\alpha'}])\]
(cf. \cite{AH}, \cite[Theorem 1.1]{CW} for the nonequivariant version). 

Note that $f: X_1\to X_2$ is an equivariant $\alpha$ map if and only if
	\begin{align*}
		\Sigma f: X_1&\to X_2\times\mathbb{R}\\
		x&\mapsto (f(x), 0)
	\end{align*}
	is also an equivariant $\alpha$-map. Let $p: X_2\times\mathbb{R}\to X_2$. When $\text{dim}X_1+\text{dim}X_2$ is odd, then we can define $f_*^\alpha: K_H^*(X_1, f^*\tau)\to K_H^{*-1}(X_2, \tau)$ to be the composition of the `suspended' pushforward $\Sigma f_*^\alpha: K_H^*(X_1, f^*\tau)\to K_H^{*}(X_2\times\mathbb{R}, p^*\tau)$ and the suspension isomorphism $K_H^{*}(X_2\times\mathbb{R}, p^*\tau)\to K_H^{*-1}(X_2, \tau)$. The discussion in the previous paragraph carries over to the case that $\text{dim}X_1+\text{dim}X_2$ is odd, through the map $\Sigma f_*^\alpha$ which falls under the previous case discussed.
 %All these results admit natural generalizations in the equivariant setting, which we use in this proof. 

Let $i: X_1\to X_2$ be an embedding, the normal bundle $N_{X_2/X_1}$ an equivariant complex vector bundle, and $c=c_1(N_{X_2/X_1})$. Then we have that $i^*(i^c_*(1))$ is the Euler class $e(N_{X_2/X_1})=\sum_{i\geq 0}(-1)^i\left[\bigwedge\nolimits^i_{\mathbb{C}}N_{X_2/X_1}\right]\in K_{H}^0(X_1)$.
 \subsection{The Freed-Hopkins-Teleman Theorem}
 Now assume that $G$ is a simple, connected and simply-connected compact Lie group acting on itself by conjugation. Note that $H_G^3(G, \mathbb{Z})\cong\mathbb{Z}$ (cf. \cite[\S 3.1]{M}). We shall denote the equivariant twist of $G$ corresponding to $n\in H_G^3(G, \mathbb{Z})$ by the multiplicative notation $\tau^n$. The equivariant twisted $K$-theory $K_G^{*}(G, \tau^n)$ is a ring whose ring structure is given by the Pontryagin product, which is induced by the group multiplication:
\[K_G^i(G, \tau^n)\otimes K_G^j(G, \tau^n)\cong K_G^{i+j}(G\times G, \pi_1^*\tau^n\otimes \pi_2^*\tau^n)\stackrel{m_*}{\longrightarrow} K_G^{i+j-\text{dim }G}(G, \tau^n).\]
Note that $H_G^2(G\times G, \mathbb{Z})=0$. So there is only one equivalence class of trivializations of Morita isomorphism between $m^*\tau^n$ and $\pi_1^*\tau^n\otimes \pi_2^*\tau^n$. Hence $m_*$ is well-defined and can only be $m^0_*$ (cf. \cite[\S 4.4]{M}). The same is true of the pushforward $f_{G, *}^0: K_G^*(\text{pt})\to K_G^{*+\text{dim }G}(G, \tau^{n})$ induced by the inclusion $f_G: \text{pt}\to G$ of the identity element into $G$ as $H_G^2(\text{pt}, \mathbb{Z})=0$. The Freed-Hopkins-Teleman Theorem links the Verlinde algebra of $G$ and its equivariant twisted $K$-theory. 
\begin{theorem}\label{FHT1}(\cite[Theorem 1]{FHT3}) Let $\textsf{h}^\vee$ be the dual Coxeter number of $G$ and $k+\textsf{h}^\vee> 0$. The pushforward map
\[f_{G, *}^0: R(G)\cong K_G^*(\text{pt})\to K_G^{*+\text{dim }G}(G, \tau^{k+\textsf{h}^\vee})\]
	induced by $f_G$, the inclusion of the identity element of $G$ into $G$ is a ring epimorphism, with kernel being the level $k$ Verlinde ideal $I_k(G)$. In other words, $K_G^{\text{dim }G}(G, \tau^{k+\textsf{h}^\vee})$ is isomorphic to the level $k$ Verlinde algebra $R_k(G)\cong R(G)/I_k(G)$, and $K_G^{\text{dim }G+1}(G, \tau^{k+\textsf{h}^\vee})=0$.
\end{theorem}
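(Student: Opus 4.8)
This is the Freed--Hopkins--Teleman theorem; its full proof occupies \cite{FHT3}, so in the body of this paper it is simply invoked, but the argument I would reconstruct runs as follows. The plan is to realise the pushforward $f_{G,*}$ concretely through a \emph{Dirac family}. First I would attach to each level $k$ positive energy representation $\mathcal{H}_\lambda$ of the loop group $LG$ the cubic Dirac operator on $G$ coupled to $\mathcal{H}_\lambda$; since the spinor module on the Lie algebra carries level $\textsf{h}^\vee$, the coupled family naturally sees the twist $\tau^{k+\textsf{h}^\vee}$, with the genuinely projective $LG$-action on the spinors furnishing a cocycle for it. This produces a $G$-equivariant family of Fredholm operators of Dirac type, parametrised by $g \in G$, whose kernel jumps precisely over the regular conjugacy class through $e^{2\pi\ii(\lambda+\rho)/(k+\textsf{h}^\vee)}$ and carries there a representation of the isotropy group; the associated index class in $K_G^{\dim G}(G, \tau^{k+\textsf{h}^\vee})$ is, up to the $\rho$-shift, the image of $[V_\lambda] \in K_G^*(\mathrm{pt})$ under $f_{G,*}$. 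The occurrence of $\textsf{h}^\vee$ in the twist is thereby explained by the half-spinor (``$\rho$'') correction intrinsic to the Dirac operator, paralleling the way the ``$+2c'$'' ambiguity in $f_*^{c+2c'}(a) = f_*^c(a\cdot[L_{c'}])$ shifts the twisting level.

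Next I would check that the construction lands in, and exhausts, the expected group by reduction to a maximal torus. Restricting along $i\colon T \hookrightarrow G$ and using the description of $K_T^*(T, i^*\tau^n)$ from Section~\ref{equivtwist}, the torus problem becomes a Mayer--Vietoris computation over the Weyl alcove: $T$, with trivial conjugation action, is covered by pieces over which the twisted $K$-theory is free as an $R(T)$-module and concentrated in a single degree, and assembling these shows that $K_T^{\dim T}(T, i^*\tau^{k+\textsf{h}^\vee})$ is a free abelian group with vanishing odd part. Passing to the Weyl-invariant part via the ``splitting principle'' of Corollary~\ref{antisym} and comparing with the rank of $R_k(G)$ (the number of level $k$ weights) then forces $f_{G,*}$ to be surjective and $K_G^{\dim G+1}(G, \tau^{k+\textsf{h}^\vee})$ to vanish.

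Finally I would identify $\ker f_{G,*}$ with the level $k$ Verlinde ideal $I_k(G)$. On the geometric side, $\ker f_{G,*}$ consists of the virtual representations whose Dirac index is identically zero, i.e.\ whose character vanishes at every regular point $e^{2\pi\ii(\lambda+\rho)/(k+\textsf{h}^\vee)}$ with $\lambda$ a level $k$ weight; on the algebraic side, $I_k(G)$ is by definition cut out by exactly these evaluation conditions. Hence the two ideals coincide, giving $K_G^{\dim G}(G, \tau^{k+\textsf{h}^\vee}) \cong R(G)/I_k(G) = R_k(G)$ as rings, the Pontryagin product on the left matching the fusion product under the evaluation picture, and the theorem follows.

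The principal obstacle is the last step made quantitative: proving that the kernel is \emph{exactly} $I_k(G)$, and correspondingly that the ranks match, is essentially equivalent to the Verlinde dimension formula, and establishing this on the $K$-theory side requires either the full Dirac-induction machinery of \cite{FHT3} --- in particular, functional-analytic control of the spectral-flow index class for a family of unbounded operators carrying a genuinely projective (Dixmier--Douady) symmetry --- or a delicate Mayer--Vietoris analysis over the strata of the Weyl alcove, with careful bookkeeping of the twisting cocycle and of the varying isotropy subgroups. Both routes are substantial, which is why a known result of this depth is cited rather than reproved here.
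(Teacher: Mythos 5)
The paper offers no proof of this statement: it is quoted verbatim as \cite[Theorem 1]{FHT3}, exactly as you recommend, so there is nothing internal to compare your sketch against. Your outline of the Freed--Hopkins--Teleman argument --- Dirac families coupled to level $k$ positive energy representations, the spinor module accounting for the $\textsf{h}^\vee$ shift in the twist, localization of the index class on the regular conjugacy classes through $e^{2\pi i(\lambda+\rho)/(k+\textsf{h}^\vee)}$, and identification of the kernel with the characters vanishing at those points, i.e.\ $I_k(G)$ --- is a fair high-level account of how the cited theorem is actually proved. One caveat if you were to flesh this out: you cannot invoke Corollary \ref{antisym} (or the computation of $i^*(W_\mu)$) in such a reconstruction, since in this paper those are derived \emph{from} Theorem \ref{FHT1} (the classes $W_\mu$ and the ring structure on $K_G^{\dim G}(G,\tau^{k+\textsf{h}^\vee})$ are only available once the FHT isomorphism is in hand), so that step would be circular; FHT's own reduction to the maximal torus runs through their computation of $K_{T\rtimes W}^*(T,i^*\tau)$ and the affine Weyl group action, which is the non-circular substitute for the ``splitting principle'' you appeal to.
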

Let $V_\mu\in R(G)$ be the isomorphism class of the irreducible representation of $G$ with highest weight $\mu$, $W_\mu:=f_{G, *}(V_\mu)$, $B$ the basic inner product on $\mathfrak{g}^*$, and $\Lambda_k^*$ be the set of level $k$ weights defined by 
\[\displaystyle\Lambda_k^*:=\left\{\mu\in\Lambda^*_+\left| B(\mu, \alpha_{\text{max}})\leq k,\ \alpha_{\text{max}}:=\text{the highest root}\right.\right\}.\] 
Through the link with the Verlinde algebra, $W_\mu$ can be interpreted as the isomorphism class of the irreducible positive energy representation of the loop group $LG$ of level $k$ with highest weight $\mu$, if $\mu\in\Lambda_k^*$.

There is a similar description of equivariant twisted $K$-theory of a torus $T$ with the trivial action by itself. Let $\tau$ be an equivariant twist on $T$ which is classified by an element of $H_T^2(\text{pt}, \mathbb{Z})\otimes H^1(T, \mathbb{Z})\subseteq H_T^3(T, \mathbb{Z})$, and $\chi(T)$ the character group of $T$. We analogously endow $K_T^*(T, \tau)$ with the ring structure given by the Pontryagin product induced by $m_*^0$. %(i.e. the map is associated with the equivalence class of the Morita isomorphism $\pi_1^*\tau\otimes\pi_2^*\tau\cong m^*\tau$ which corresponds to the trivial $T$-Hilbert space bundle over $T$). 
Similar to $f_{G, *}^0$, the pushforward 
\[f_{T, *}^0: K_T^*(\text{pt})\to K_T^{*+\text{dim}T}(T, \tau)\]
then is a ring homomorphism. Let $\chi(T)$ be the character group $\text{Hom}(T, U(1))$ of $T$, which is also the weight lattice $\Lambda^*$. The subgroup $H_T^2(\text{pt}, \mathbb{Z})\otimes H^1(T, \mathbb{Z})$ of the equivariant cohomology group $H_T^3(T, \mathbb{Z})$ can be identified with 
\begin{align*}
	H_T^2(\text{pt}, \mathbb{Z})\otimes H^1(T, \mathbb{Z})&\cong\chi(T)\otimes\chi(T)\\
												&\cong\text{Hom}(\pi_1(T), \chi(T))\\
												&\subseteq \text{Hom}(\mathfrak{t}, \mathfrak{t}^*).
\end{align*}
In this way $\tau$ is associated with a group homomorphism $p\in\text{Hom}(\pi_1(T), \chi(T))$. The following proposition generalizes the result on the group structure of $K_T^*(T, \tau)$ asserted in \cite[\S6.2]{FHT3}.
\begin{proposition}\label{FHTtorus} Let $T$ be a torus and $f_T: \text{pt}\to T$ be the inclusion of the identity element into $T$. Let $\tau$ be a $T$-equivariant twist on $T$ which corresponds to a full-rank homomorphism $p\in\text{Hom}(\pi_1(T), \chi(T))$. Then the pushforward map 
\[f_{T, *}^0: R(T)\cong K_T^{*}(\text{pt})\to K_T^{*+\text{dim }T}(T, \tau)\]
	is a surjective ring homomorphism with kernel being the ideal $\displaystyle\left(e^\chi-1\left|\chi\in p(\pi_1(T))\right.\right)$.
\end{proposition}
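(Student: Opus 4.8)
The proof I would give is an induction on $\dim T$ that peels off one circle factor at a time, reducing to a twisted Mayer--Vietoris computation and, ultimately, to the regularity of a sequence of elements of $R(T)$. First I would recast the pushforward. Since $f_T$ is the inclusion of the identity of the group $T$ and the Pontryagin product is induced by the multiplication $m\colon T\times T\to T$ with $m\circ(f_T\times f_T)=f_T$, functoriality of the pushforward gives $f_{T,*}^0(a)\cdot f_{T,*}^0(b)=f_{T,*}^0(ab)$, so $f_{T,*}^0$ is a ring map (the torus analogue of Theorem \ref{FHT1}; replacing the ``$0$'' by another Morita class only multiplies $f_{T,*}$ by a unit of $R(T)$ and so changes neither surjectivity nor the kernel). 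In particular $f_{T,*}^0$ is a map of $R(T)=K_T^*(\mathrm{pt})$-modules, so it is enough to prove that $K_T^*(T,\tau)$ is a cyclic $R(T)$-module generated by $f_{T,*}^0(1)$ with annihilator $\bigl(e^\chi-1\mid\chi\in p(\pi_1T)\bigr)$. Dropping a circle factor of $T$ leaves the remaining subtorus with its full $T$-equivariance, so the induction must be run for the more general statement: \emph{if a torus $S$ acts trivially on a torus $T_0$ and $\tau$ corresponds to an injective $p\colon\pi_1(T_0)\hookrightarrow\chi(S)$, then $K_S^*(T_0,\tau)$ is concentrated in degree $\dim T_0\bmod 2$ and the pushforward $R(S)\to K_S^{*+\dim T_0}(T_0,\tau)$ is the quotient onto $R(S)/\bigl(e^\chi-1\mid\chi\in p(\pi_1T_0)\bigr)$}; the proposition is the case $S=T_0=T$.

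For the inductive step I would fix a splitting $T_0\cong T_0'\times S^1$ coming from a primitive character, let $\gamma_0\in\pi_1(S^1)\subset\pi_1(T_0)$ be the generator, and put $\chi_0:=p(\gamma_0)\in\chi(S)$. Covering the $S^1$-factor by two arcs presents $T_0$ as a union of two copies of $T_0'\times(\text{arc})$, each $S$-equivariantly homotopy equivalent to $T_0'$ and hence, by induction, carrying $K'\cong R(S)/\bigl(e^\chi-1\mid\chi\in p(\pi_1T_0')\bigr)$ concentrated in one degree; on one of the two components of their overlap the two trivializations of $\tau$ are glued by tensoring with the equivariant line bundle of Chern class $\chi_0$, i.e.\ by multiplication by $e^{\chi_0}$ on the $R(S)$-module $K'$. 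The twisted Mayer--Vietoris sequence then becomes, after an elementary change of basis, the long exact sequence of a mapping cone:
\[\cdots\longrightarrow K'\xrightarrow{\ e^{\chi_0}-1\ }K'\longrightarrow K_S^*(T_0,\tau)\longrightarrow K'\xrightarrow{\ e^{\chi_0}-1\ }K'\longrightarrow\cdots.\]
Iterating over a basis $\gamma_1,\dots,\gamma_r$ of $\pi_1(T)$ (with $S=T$ fixed throughout) identifies $K_T^*(T,\tau)$ with the homology of the Koszul complex over $R(T)=\mathbb{Z}[\chi(T)]$ of the sequence $e^{p(\gamma_1)}-1,\dots,e^{p(\gamma_r)}-1$.

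The decisive point is algebraic: this sequence is a \emph{regular} sequence in $R(T)$. Its quotient is $\mathbb{Z}\bigl[\chi(T)/p(\pi_1T)\bigr]$, which since $p$ is full-rank is a finite $\mathbb{Z}$-algebra and hence has Krull dimension $1$; as $R(T)=\mathbb{Z}[\chi(T)]$ is a regular (so Cohen--Macaulay) ring of Krull dimension $\dim T+1$, any sequence of $\dim T$ elements cutting the dimension down by exactly $\dim T$ is automatically regular. (In the general inductive statement $p$ need not be full-rank, but its injectivity forces the image of $\chi_0$ in $\chi(S)/p(\pi_1T_0')$ to have infinite order, so $e^{\chi_0}-1$ is a nonzerodivisor on $K'$ at each stage, which is all the step needs.) Hence the Koszul homology lies in homological degree $0$ and equals $R(T)/\bigl(e^{p(\gamma_j)}-1\bigr)_j=R(T)/\bigl(e^\chi-1\mid\chi\in p(\pi_1T)\bigr)$, using $e^{\mu+\nu}-1=e^\mu(e^\nu-1)+(e^\mu-1)$; so $K_T^*(T,\tau)$ vanishes outside degree $\dim T\bmod 2$ and there equals this quotient. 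Finally, $f_T$ factors through each intermediate subtorus, so $f_{T,*}^0(1)$ is the image of $1$ under the iterated Mayer--Vietoris connecting maps, and at each stage that image is the generator of the relevant cokernel; by $R(T)$-linearity $f_{T,*}^0$ is the asserted quotient map.

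The part I expect to be most delicate is not the commutative algebra but making the geometry line up: verifying that the Mayer--Vietoris connecting map is exactly multiplication by $e^{\chi_0}-1$ (with the correct sign and the correct normalization of the Morita/pushforward data), and being disciplined about running the induction with the equivariance group held fixed while the base torus shrinks --- so that the twist genuinely mixes the circle directions and cannot be diagonalized away, even though the underlying space does split as a product of circles.
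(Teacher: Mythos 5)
Your proposal is correct and follows essentially the same strategy as the paper: induct over a chain of subtori peeling off one circle factor at a time (keeping the full $T$-equivariance), cover the new circle by two arcs, and read off from the twisted Mayer--Vietoris sequence that the gluing is multiplication by $e^{\chi_0}$, so that the new twisted $K$-group is the cokernel of $e^{\chi_0}-1$ on the previous one. Two points of comparison are worth recording. First, the place where you are more complete than the paper: the inductive step needs the ``wrong-degree'' group to vanish (the paper invokes ``is $0$ by inductive hypothesis'' for the term after the boundary map), and this amounts exactly to your observation that injectivity of $p$ forces the image of $\chi_0$ in $\chi(T)/p(\pi_1 T_0')$ to have infinite order, hence $e^{\chi_0}-1$ is a nonzerodivisor on the group ring quotient; the paper only notes that $\chi_0$ is nontrivial, so your explicit regular-sequence/nonzerodivisor argument fills in a step the paper leaves implicit. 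Second, the place where you diverge and where your own caveat applies: to identify $f_{T,*}^0(1)$ as a generator (equivalently, surjectivity of $f_{T,*}^0$), you assert that the Gysin pushforward along $T_0'\hookrightarrow T_0'\times S^1$ agrees with the Mayer--Vietoris connecting map applied to a class on one arc; the paper instead proves surjectivity by dualizing to $K$-homology via Poincar\'e duality and comparing the Mayer--Vietoris ladders under the pushforward maps, which avoids having to prove that compatibility (though the paper does use the identification $\partial(0,[1])=[1]$ later, in the lemma computing $\psi^\ell([1])$). Either route works, but if you keep yours you should supply the suspension/clutching argument for that compatibility, including the Morita normalization pinning down which arc carries the trivial fiberwise representation; note also that once surjectivity and the abstract module isomorphism $K_T^{*}(T,\tau)\cong R(T)/\left(e^\chi-1\mid\chi\in p(\pi_1 T)\right)$ are in hand, the kernel statement is automatic, since any $R(T)$-module surjection of $R(T)$ onto a cyclic module isomorphic to $R(T)/I$ has kernel equal to the annihilator $I$, so the sign and normalization ambiguities you worry about cannot affect the final statement.
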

\begin{proof}
	Suppose $T$ is an $n$-dimensional torus, and for $r=1, 2, \cdots, n$, let $T_r$ be an $r$-dimensional subtorus of $T$ such that $T_r$ is a subtorus of $T_{r+1}$ (we take $T_n=T$ and $T_0=\{\text{Id}_T\}$). For $r<s$, let $f_{T_{r, s}}: T_r\to T_s$ be the inclusion map. By abuse of notation, denote the image of $e^\chi\in R(T)$ under the pushforward map $f_{T_{0, r}, *}^0: R(T)\cong K_T^*(\text{pt})\to K_T^r(T_r, \tau|_{T_r})$ by $[e^\chi]$. Note that $K_T^*(T_r, \tau|_{T_r})$ is also an $R(T)$-algebra equipped with the Pontryagin product. We shall show more generally that the $R(T)$-algebra homomorphism $f_{T_{0, r}, *}^0: R(T)\cong K_T^0(\text{pt})\to K_T^r(T_r, \tau|_{T_r})$ is onto with kernel being the ideal $\left(e^\chi-1\left|\chi\in p(\pi_1(T_r))\right.\right)$ of $R(T)$. This can be done by induction on $r$. The base case $r=0$ is obvious.
	
	We think of the circle $S^1$ as the set of complex numbers of unit modulus. Let $\mathcal{N}=S^1\setminus\{-i\}$, $\mathcal{S}=S^1\setminus\{i\}$. Then $\mathcal{N}\cap \mathcal{S}$ is the disjoint union of two open arcs $\mathcal{W}$ (on the left) and $\mathcal{E}$ (on the right). Let $t$ be the standard $S^1$-representation. So $R(S^1)\cong\mathbb{Z}[t, t^{-1}]$. Identifying $T_{r+1}$ with $T_r\times S^1$, we consider the Mayer-Vietoris sequence for $K_{T}^*(T_{r+1}, \tau|_{T_{r+1}})$ with respect to the open cover $\{T_r\times\mathcal{N}, T_r\times\mathcal{S}\}$. 
	\begin{eqnarray*}
		\xymatrix{\cdots\ar[r]& K_T^r(T_{r+1}, \tau|_{T_{r+1}})\ar[r]&K_T^r(T_r\times\mathcal{N}, \tau|_{T_r\times\mathcal{N}})\oplus K_T^r(T_r\times\mathcal{S}, \tau|_{T_r\times\mathcal{S}})\ar[r]^-{r^*}&} 
	\end{eqnarray*}
	\begin{eqnarray*}
		\xymatrix@+2pc{K_T^r(T_r\times\mathcal{W}, \tau|_{T_r\times \mathcal{W}})\oplus K_T^r(T\times\mathcal{E}, \tau|_{T_r\times\mathcal{E}})\ar[r]^-{\partial}& K_T^{r+1}(T_{r+1}, \tau|_{T_{r+1}})\ar[r]&\cdots}
	\end{eqnarray*}
	The twist $\tau|_{T_{r+1}}$ can be regarded as the restricted map $p|_{T_{r+1}}\in \text{Hom}(\pi_1(T_{r+1}), \chi(T))$. Let ${\chi_{r+1}}$ be the image of the positive generator of the fundamental group of the circle factor of $T_{r+1}$ which with $T_r$ generates $T_{r+1}$. As $p$ is of full rank, it is injective and in particular, $\chi_{r+1}$ is never the trivial representation as long as $r+1\leq n$. The restricted twist $\tau|_{T_{r+1}}$ can be thought of, with respect to the open cover, as the two twists $\tau|_{T_r\times\mathcal{N}}$ and $\tau|_{T_r\times\mathcal{S}}$ glued together with an equivariant line bundle-valued transition function on $(T_r\times\mathcal{N})\cap(T_r\times\mathcal{S})=(T_r\times\mathcal{W})\cup(T_r\times\mathcal{E})$ from $T_r\times\mathcal{N}$ to $T_r\times\mathcal{S}$. This transition function takes the trivial line bundle with a certain fiberwise $T$-representation $\chi$ on $T_r\times\mathcal{W}$ and another one with fiberwise $T$-representation $\chi-\chi_{r+1}$ on $T_r\times\mathcal{E}$ (the line bundle-valued transition function is chosen in such a way that when traversing $S^1$ counterclockwise, the total holonomy is $\chi_{r+1}$. The twists with different representations $\chi$ on $\mathcal{W}$ are all Morita isomorphic, and acted upon transitively by the equivariant Picard group of the identity $1\in T$, i.e. $H_T^2(\text{pt}, \mathbb{Z})\subset H_{T}^2(T_{r+1}, \mathbb{Z})$). The restriction $f_{T_{0, r+1}}^*\tau|_{T_{r+1}}$ at the identity $1\in T_{r+1}$ is Morita isomorphic to the trivial equivariant twist.  This isomorphism is trivialized by a Hilbert space bundle over the identity with the trivial $T$-action, because the pushforward $f^0_{T_{0, r+1},*}$ corresponds to $0\in H_T^2(\text{pt}, \mathbb{Z})$. Thus the line bundle taken by the transition function on $\mathcal{E}$ (which contains the identity $1$) has the trivial $T$-action and so $\chi=\chi_{r+1}$. 
Each of the four summands in the middle two terms of the above Mayer-Vietoris sequence is isomorphic to $K_T^r(T_r, \tau|_{T_r})$, which in turn is isomorphic to $R(T)/(e^\chi-1|\chi\in p(\pi_1(T_r)))$ by the inductive hypothesis. Note that the $R(T)$-module structure of $K_T^r(T_r, \tau|_{T_r})$ is given by $e^{\chi_1}\cdot[e^\chi_2]=i^*(e^{\chi_1})\cdot[e^{\chi_2}]$, where $i^*: K_T^*(\text{pt})\to K_T^*(T_r)$ is the map induced by the collapsing map $T_r\to \text{pt}$, and the second product is the $K$-theory product. For any $r$, $f_{T_{0, r}, *}^0$ is a $K_T^*(\text{pt})$-module homomorphism. So in $K_T^r(T_r, \tau|_{T_r})$, we have $i^*(e^{\chi_1})\cdot[e^{\chi_2}]=[e^{\chi_1+\chi_2}]$.
	
	While $r^*([1], 0)=([1], [1])$, $r^*(0, [1])$, the (signed and twisted) restriction, is $-(i^*(e^{\chi_{r+1}})\cdot[1], [1])=(-[e^{\chi_{r+1}}], -[1])$ due to the line bundle-valued transition function of the twist $\tau$. The coboundary map $\partial$ is onto as $K_T^{r+1}(T_r \times\mathcal{N}, \tau|_{T_r\times\mathcal{N}})\oplus K_T^{r+1}(T_r\times\mathcal{S}, \tau|_{T_r\times\mathcal{S}})$, the term following $K_T^{r+1}(T_{r+1}, \tau|_{T_{r+1}})$, is 0 by inductive hypothesis. Thus as a module over $K_T^r(T_r, \tau|_{T_r})$ (and over $R(T)$), $K_T^r(T_{r+1}, \tau|_{T_{r+1}})$ is isomorphic to 
	\begin{align*}
		&\left(R(T)/(e^\chi-1|\chi\in p(\pi_1(T_r)))\right)\left/\left(\det\begin{pmatrix}[1] &[1]\\ -[e^{\chi_{r+1}}]& -[1]\end{pmatrix}\right)\right.\\
		\cong &(R(T)/(e^\chi-1| \chi\in p(\pi_1(T_{r}))))/(e^{\chi_{r+1}}-1)\\
		\cong&R(T)/(e^\chi-1| \chi\in p(\pi_1(T_{r+1}))).	
	\end{align*}
	Next, we shall show that $f_{T_{0, r+1}, *}^0$ is onto. This time we treat $K_T^*(T_{r+1}, \tau|_{T_{r+1}})$ as the $K$-homology $K_{r+1-*}^T(T_{r+1}, \tau|_{T_{r+1}})$ by Poincar\'e duality and consider its Mayer-Vietoris sequence with respect to the same open cover, together with the various pushforward maps induced by the inclusion of $T_0$ into $T_{r+1}$. 
	\begin{eqnarray*}
		\xymatrix@+1pc{\cdots\ar[r]& K_0^T(\text{pt})\oplus K_0^T(\text{pt})\ar[r]\ar[d]^{f_{T_{0, r}, *}^0\oplus f_{T_{0, r}, *}^0}& K_0^T(\text{pt})\ar[d]^{f_{T_{0, r+1}, *}^0}\ar[r]&\cdots\\ \cdots\ar[r]& K_0^T(T_r\times\mathcal{N}, \tau|_{T_r\times\mathcal{N}})\oplus K_0^T(T_r\times\mathcal{S}, \tau|_{T_r\times \mathcal{S}})\ar[r]^-{i_*^{\mathcal{N}}-i_*^{\mathcal{S}}}& K_0^T(T_{r+1}, \tau|_{T_{r+1}})\ar[r]^-\partial&\cdots}
	\end{eqnarray*}
	Here, the bottom row is the Mayer-Vietoris sequence for $K_0^T(T_{r+1}, \tau|_{T_{r+1}})$ while the top row is the one for $K_0^T(\text{pt})\cong R(T)$. By the commutativity of the square in the above diagram and the surjectivity of $f_{T_{0, r}, *}^0\oplus f_{T_{0, r},*}^0$ (by inductive hypothesis) and $i_*^\mathcal{N}-i_*^\mathcal{S}$ (as $K_{-1}^T(T_r\times\mathcal{N}, \tau|_{T_r\times\mathcal{N}})\oplus K_{-1}^T(T_r\times\mathcal{S}, \tau|_{T_r\times\mathcal{S}})$, the term following $K_0^T(T_{r+1}, \tau|_{T_{r+1}})$, is 0 by inductive hypothesis), we get the desired surjectivity of $f_{T_{0, r+1}, *}^0$. Being a surjective ring homomorphism, $f_{T_{0, r+1}, *}^0$ gives the unique $R(T)$-algebra structure to $K_T^{r+1}(T_{r+1}, \tau|_{T_{r+1}})$, which can be seen now to be isomorphic to $R(T)/(e^\chi-1|\chi\in p(\pi_1(T_{r+1})))$ as an $R(T)$-algebra. The kernel of $f_{T_{0, r+1}, *}^0$ obviously is the ideal $(e^\chi-1| \chi\in p(\pi_1(T_{r+1})))$. The inductive step is complete and hence this finishes the proof.
	%Moreover, 
	%\[\partial(a, b)=\left[\det\begin{pmatrix}1& 1\\ a& b\end{pmatrix}\right]=[b-a].\]
	%In particular, $\partial(0, 1)=[1]$.
\end{proof}
\subsection{The restriction map $i^*: K_G^*(G, \tau^{n})\to K_T^*(T, i^*\tau^{n})$}
Let $i: T\to G$ be the inclusion of a maximal torus $T$ into $G$, and $n$ an arbitrary integer. %By Theorem \ref{FHT1}, we can identify $K_G^*(G, \tau^{k+\textsf{h}^\vee})$ with $R_k(G)$ as $R(G)$-algebras. 
\begin{proposition}
	Let $j^*: K_G^*(G, \tau^n)\to K_T^*(G, \tau^n)$ be the map restricting the $G$-action to the $T$-action. Then the map
	\begin{align*}
		R(T)\otimes_{R(G)}K_G^*(G, \tau^n)&\to K_T ^*(G, \tau^n)\\
		 u\otimes v&\mapsto u\cdot j ^*(v)
	\end{align*}
	is an algebra isomorphism.
\end{proposition}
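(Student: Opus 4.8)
The plan is to treat this as the twisted, equivariant version of the classical base-change (``K\"unneth'') theorem in equivariant $K$-theory and to mimic the standard proof. The first move is to turn the left $T$-space $G$ into a $G$-space via Frobenius reciprocity: the $G$-equivariant homeomorphism
\[G\times_T G\ \xrightarrow{\ \sim\ }\ G/T\times G,\qquad [g,x]\longmapsto (gT,\,gxg^{-1}),\]
(where $T$ acts on the second copy of $G$ by conjugation and $G$ acts diagonally on $G/T\times G$ by conjugation on the second factor), together with the induced-twist isomorphism for twisted $K$-theory, gives a natural identification
\[K_T^*(G,\tau^n)\ \cong\ K_G^*(G/T\times G,\ \mathrm{pr}_2^*\tau^n).\]
One checks that under this identification, and using $K_G^*(G/T)\cong R(T)$, the map of the Proposition corresponds to the external-product map
\[\Theta\colon K_G^*(G/T)\otimes_{R(G)}K_G^*(G,\tau^n)\longrightarrow K_G^*(G/T\times G,\ \mathrm{pr}_2^*\tau^n),\qquad a\otimes b\longmapsto \mathrm{pr}_1^*a\cdot\mathrm{pr}_2^*b,\]
where the dot denotes the module structure of twisted $K$-theory over $K_G^*(G/T\times G)$. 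A short check shows $\Theta$ is well defined and multiplicative (the latter because restricting the $G$-action to $T$ is a ring homomorphism for the Pontryagin products), so everything reduces to proving that $\Theta$ is an isomorphism of $R(G)$-modules.

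For this I would use two standard inputs. First, by the Pittie--Steinberg theorem $R(T)$ is a free $R(G)$-module of rank $|W|$ --- here $G$ is simply connected, so $\pi_1(G)$ is torsion-free --- and, regarded as $K_G^*(G/T)$, it is concentrated in even cohomological degree; fix a homogeneous $R(G)$-basis $\sigma_1,\dots,\sigma_{|W|}$ of $K_G^*(G/T)$. Second, both functors $Y\mapsto R(T)\otimes_{R(G)}K_G^*(Y,\sigma)$ and $Y\mapsto K_T^*(Y,i^*\sigma)$ are $\mathbb Z/2$-graded, twist-dependent cohomology theories on finite $G$-CW complexes, the first being exact precisely because $R(T)$ is $R(G)$-flat, and $\Theta$ is a natural transformation between them. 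Running the usual induction over a $G$-CW filtration of $G$ (conjugation action), with Mayer--Vietoris and the five lemma, reduces the problem to checking that $\Theta$ is an isomorphism on each equivariant cell, i.e.\ on an orbit $G/H$ (with $H=Z_G(x)$ a centralizer) carrying the twist obtained by restricting $\tau^n$, which is a class in $H_H^3(\mathrm{pt},\mathbb Z)$. Equivalently, one may package the whole argument as a twisted Leray--Hirsch theorem for the projection $G/T\times G\to G$: since $K_G^*(G/T)$ is free over $R(G)$ and even, the classes $\mathrm{pr}_1^*\sigma_i$ form a ``fibre basis'' and the associated Atiyah--Hirzebruch-type filtration degenerates, which forces $\Theta$ to be an isomorphism.

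The orbit-level statement --- that for a centralizer $H$ and a class $\sigma\in H_H^3(\mathrm{pt},\mathbb Z)$ one has $R(T)\otimes_{R(G)}K_H^*(\mathrm{pt},\sigma)\cong K_T^*(G/H,\,i^*\sigma)$ --- is the real content. Using Frobenius reciprocity once more (the identifications $K_G^*(G/H\times G/T)\cong K_H^*(G/T)\cong K_T^*(G/H)$ and their twisted refinements) together with the double-coset decomposition $H\backslash G/T$ and the freeness of representation rings over those of maximal-rank subgroups, this comes down to the untwisted base-change over homogeneous spaces, which is classical; the twist merely has to be carried along, and because on each orbit it is pulled back from a point it does not interact with the $R(T)$-freeness used in the fibre direction.

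I expect the main obstacle to be precisely this bookkeeping with twists that do not trivialize on subgroups: one must set up the twisted induction/Frobenius isomorphisms carefully enough that the Morita-class data is tracked correctly through every step (so that, for instance, the twist on $G/T\times G$ really is $\mathrm{pr}_2^*\tau^n$ on the nose rather than a Morita-twisted variant, and the $K$-theoretic pushforwards used are the intended ones), and one must check that the flatness/five-lemma machinery and the Leray--Hirsch degeneration go through unchanged in the twisted setting. The upgrade from a module isomorphism to an algebra isomorphism, and the matching of Pontryagin products on the two sides, are routine by comparison.
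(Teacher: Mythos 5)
Your first step coincides with the paper's: the induction homeomorphism identifying $K_T^*(G,\tau^n)$ with $K_G^*(G/T\times G,\mathrm{pr}_2^*\tau^n)$, and the appeal to the Pittie--Steinberg freeness of $K_G^*(G/T)\cong R(T)$ over $R(G)$. Where you diverge is in how the base-change isomorphism is established: the paper applies the equivariant K\"unneth spectral sequence (in its twisted form, as used in \cite{Bra} following Hodgkin) to $K_G^*(G/T\times G, 1\otimes\tau^n)$ and observes that freeness kills all higher $\mathrm{Tor}$ terms, so the sequence collapses at $E_2$ and the edge map is the map of the Proposition; you instead propose to re-derive the isomorphism from scratch by a $G$-CW/Mayer--Vietoris induction reducing to orbits, packaged as a twisted Leray--Hirsch theorem. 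The reduction to orbits is standard and would be fine if the orbit case were in hand.

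However, the orbit-level statement --- which you yourself call ``the real content'' --- is left essentially unproven, and the justification offered does not suffice. Centralizers $H=Z_G(x)$ in a simply connected $G$ are connected, but $\pi_1(H)$ can have torsion (e.g.\ the centralizer $SO(4)$ of an involution in $G_2$), so $H^3_H(\mathrm{pt},\mathbb{Z})\cong\mathrm{Ext}(\pi_1(H),\mathbb{Z})$ is genuinely nonzero and the restricted twist can be a nontrivial torsion class; then $K_H^*(\mathrm{pt},\sigma)$ is a group of $\sigma$-projective representations, and asserting that the twisted statement ``comes down to the untwisted base-change, which is classical, with the twist merely carried along'' is not a proof --- one needs an actual argument (say a twisted analogue of McLeod's theorem via the central extension of $H$ determined by $\sigma$), and the ``freeness of representation rings over those of maximal-rank subgroups'' you invoke is exactly the Pittie--Steinberg-type statement whose standard hypothesis (torsion-free fundamental group) fails for such $H$. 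Moreover, even the untwisted base change $K_T^*(G/H)\cong R(T)\otimes_{R(G)}R(H)$ for a general maximal-rank $H$ is normally obtained from the very K\"unneth spectral sequence you are trying to avoid; if you are willing to quote that theorem, you should quote it once, directly for $K_G^*(G/T\times G, 1\otimes\tau^n)$ --- which is precisely the paper's much shorter proof --- and the cell-by-cell induction becomes superfluous. As written, your argument has a genuine gap at its crux.
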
	
\begin{proof}
	Let $G$ act on $G/T$ by the left action. We can apply the equivariant K\"unneth spectral sequence to $K_G^*(G/T\times G, 1\otimes\tau^n)$, which is isomorphic to $K_T^*(G, \tau^n)$. The spectral sequence collapses on the $E_2$-page, which is isomorphic to $K_G^*(G/T)\otimes_{R(G)}K_G^*(G, \tau^n)\cong R(T)\otimes_{R(G)}K_G^*(G, \tau^n)$, because $K_G^*(G/T)\cong R(T)$ is a free $R(G)$-module (with the Steinberg basis). Composing the above isomorphisms give the map in the Proposition. A priori this map is a $R(T)$-module isomorphism, but it is also compatible with the algebra structure induced by the Pontryagin product. So the map is an algebra isomorphism.
\end{proof}

\begin{proposition}\label{injective}
	The restriction map $i^*: K_G^*(G, \tau^n)\to K_T^*(T, i^*\tau^n)$ is injective. 
\end{proposition}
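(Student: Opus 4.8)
The plan is to factor $i^*$ as a composite $i^*=r\circ j^*$ and treat the two factors separately. Here $j^*\colon K_G^*(G,\tau^n)\to K_T^*(G,\tau^n)$ restricts the equivariance from $G$ to $T$, and $r\colon K_T^*(G,\tau^n)\to K_T^*(T,i^*\tau^n)$ is restriction to the $T$-fixed subspace of $G$, which equals $G^T=Z_G(T)=T$ because $T$ is a maximal torus acting by conjugation. By the preceding Proposition, under $K_T^*(G,\tau^n)\cong R(T)\otimes_{R(G)}K_G^*(G,\tau^n)$ the map $j^*$ becomes $v\mapsto 1\otimes v$, which is injective since $R(T)$ is a free — hence faithfully flat — $R(G)$-module (the Steinberg basis). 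So the problem reduces to injectivity of $r$. For $n=0$ this is the classical fact that the natural map of Grothendieck differentials $\Omega_{R(G)/\mathbb{Z}}\to\Omega_{R(T)/\mathbb{Z}}$ (cf.\ \cite{BZ}) is injective, $R(G)\hookrightarrow R(T)$ being finite, flat and generically \'etale; for $n<0$ the complex-conjugation isomorphism $K_G^*(G,\tau^n)\cong K_G^*(G,\tau^{-n})$ commutes with $i^*$, so we may assume $n\ge 1$, whence $K_G^*(G,\tau^n)\cong R_{n-\textsf{h}^\vee}(G)$ by Theorem~\ref{FHT1}. If $1\le n<\textsf{h}^\vee$ this group is zero and there is nothing to prove, so assume $n\ge\textsf{h}^\vee$ and put $k=n-\textsf{h}^\vee$.

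Next I would identify $r$ explicitly — this is the computation carried out in detail in this subsection. By the preceding Proposition and Theorem~\ref{FHT1}, $K_T^*(G,\tau^n)\cong R(T)\otimes_{R(G)}R_k(G)\cong R(T)/I_k(G)R(T)$, which is generated as an $R(T)$-module by its unit $1=j^*(f_{G,*}(1))$; and by Proposition~\ref{FHTtorus}, $K_T^*(T,i^*\tau^n)\cong R(T)/J$ with $J=(e^\chi-1\mid\chi\in p(\pi_1(T)))$ and $f_{T,*}^0$ the quotient map. Now I would apply the excess-intersection formula to the square with corners $\{e\}$, $T$, $\{e\}$, $G$ and maps $f_G$, $i$: it is Cartesian but not transverse, the excess $T$-module being $\mathfrak{g}/\mathfrak{t}$, the sum of the root lines. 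Keeping track of the Morita trivializations defining the twisted pushforwards and using the identity $f_*^{c+2c'}(a)=f_*^c(a\cdot[L_{c'}])$ recalled above to absorb the line-bundle ambiguity, this yields a formula $i^*(f_{G,*}(V))=\pm f_{T,*}^0(V|_T\cdot u)$, where $V|_T\in R(T)$ is the character of $V$ and $u\in R(T)$ is a \emph{fixed} element equal, up to an invertible monomial, to $\prod_{\alpha\in R}(1-e^{-\alpha})=(-1)^{|R_+|}\prod_{\alpha\in R_+}(e^{\alpha/2}-e^{-\alpha/2})^2$ ($R$ the set of roots); in particular $u$ is nonzero at every regular element of $T_{\mathbb{C}}$. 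By $R(T)$-linearity, $r$ is therefore the map $\overline{x}\mapsto\pm\overline{xu}$ from $R(T)/I_k(G)R(T)$ to $R(T)/J$.

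Finally I would deduce injectivity of $r$. Suppose $xu\in J$; then the Laurent polynomial $xu$ vanishes on the finite set $X=\{t\in T_{\mathbb{C}}\mid e^\chi(t)=1\text{ for all }\chi\in p(\pi_1(T))\}$. This set is $W$-stable and, by a routine integrality computation, contains the Verlinde points $\zeta_\mu=\exp(2\pi\mathrm{i}(\mu+\rho)/n)$ for $\mu\in\Lambda_k^*$; these points — and hence their $W$-orbits — are regular by the level bound defining $\Lambda_k^*$, so $u$ does not vanish on $W\cdot\{\zeta_\mu:\mu\in\Lambda_k^*\}=V(I_k(G)R(T))$. Consequently $x$ vanishes on $V(I_k(G)R(T))$; since the ideal $I_k(G)R(T)$ is radical — the Verlinde algebra is semisimple and supported at regular points, over which $T_{\mathbb{C}}\to T_{\mathbb{C}}/W$ is \'etale — and saturated, its quotient being $\mathbb{Z}$-free, we conclude $x\in I_k(G)R(T)$, i.e.\ $\overline{x}=0$. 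Hence $r$, and therefore $i^*$, is injective. The delicate step is the middle one: each twisted pushforward $f_{G,*}$, $f_{T,*}^0$ depends on a choice of Morita trivialization, an equivariant-line-bundle's worth of ambiguity since $H^2_T(\text{pt};\mathbb{Z})=\chi(T)\ne 0$, so establishing the formula for $r$ — and pinning down the monomial factor of $u$ — requires carefully propagating these choices through the non-transverse base change; the remaining inputs (faithful flatness of $R(T)$ over $R(G)$, the integrality and regularity of the Verlinde points, and radicality and saturation of the two ideals) are routine.
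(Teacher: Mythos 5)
Your argument takes a genuinely different route from the paper. The paper's proof is two lines of citation: it factors $i^*$ through the Weyl map $\omega: G/T\times_W T\to G$, invokes \cite[Theorem 7.8]{FHT3} for injectivity of $\omega^*$, and then the injectivity of the forgetful map $K^*_{T\rtimes W}(T, i^*\tau^n)\to K_T^*(T,i^*\tau^n)$ from \cite[\S 6.2]{FHT3}. You instead factor through $j^*$ (injective by faithful flatness of $R(T)$ over $R(G)$ via the K\"unneth isomorphism, which is fine) and reduce the rest to a concrete algebraic statement: that multiplication by the class of the Weyl denominator, $R(T)/I_k(G)R(T)\to R(T)/J$, is injective, proved by evaluation at the regular Verlinde points. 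This is a legitimate, self-contained alternative (and it makes visible \emph{why} injectivity holds --- regularity of the Verlinde support), at the cost of carrying real load in steps you only sketch: the description of $V(I_k(G)R(T))$ as the $W$-orbits of the points $\exp(2\pi\mathrm{i}\,B^{\flat -1}(\mu+\rho)/n)$, radicality of $I_k(G)R(T)_{\mathbb{C}}$ and $\mathbb{Z}$-freeness of the quotient, and the untwisted $n=0$ and conjugation-reduction $n<0$ cases, none of which the paper needs.

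Two specific corrections. First, your identification of $u$ is wrong: the excess module is $\mathfrak{g}/\mathfrak{t}$, which with a choice of complex structure is $\bigoplus_{\alpha\in R_+}\mathbb{C}_\alpha$, so its $K$-theoretic Euler class is $\prod_{\alpha\in R_+}(1-e^{\alpha})$ --- a product over \emph{positive} roots only, i.e.\ the Weyl denominator to the first power, not $\prod_{\alpha\in R}(1-e^{-\alpha})=\pm J(e^\rho)^2$; this is confirmed by the paper's own computation $i^*(W_0)=\left[(-1)^{|R_+|}J(e^\rho)\right]$ and Corollary \ref{antisym}. The slip is harmless for your purposes, since all you use is nonvanishing of $u$ at regular elements, but it should be fixed. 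Second, the ``delicate step'' you flag --- propagating Morita trivializations through a non-transverse base change --- can be avoided entirely: since $K_T^*(G,\tau^n)$ is cyclic over $R(T)$, it suffices to compute $r(1)=i^*(W_0)$, and the paper does this right after this proposition by composing $i_*^c$ with $i^*$ (self-intersection gives the Euler class $\prod_{\alpha\in R_+}(1-e^\alpha)$) and using $i_*^c([1])=i_*^0([1]\cdot[L_{c/2}])=e^\rho\cdot 1$, with the Morita ambiguity pinned down exactly by the relation $f_*^{c+2c'}(a)=f_*^c(a\cdot[L_{c'}])$. Substituting that computation for your excess-intersection step (and noting it does not use the present proposition, so there is no circularity) makes your argument complete.
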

\begin{proof}
	Let $W$ be the Weyl group $N_G(T)/T$. By \cite[Theorem 7.8]{FHT3}, the pullback $\omega^*: K_G^*(G, \tau^n)\to K_G^*(G/T\times_W T, \omega^*\tau^n)$ induced by the Weyl map
	\begin{align*}
		\omega: G/T\times_W T&\to G\\
		[gT, t]&\mapsto gtg ^{-1}
	\end{align*}
	is injective. Note that $i^*$ is the composition of $\omega^*$ and the restriction map
	\[K_G^*(G/T\times_W T, \omega^*\tau^n)\cong K_{T\rtimes W}^*(T, i^*\tau^n)\to K_T^*(T, i^*\tau^n)\]
	which by the discussion in \cite[\S6.2]{FHT3} is also injective. Hence $i^*$ is injective.
\end{proof}
%Theorem \ref{FHT1} says that $f_{G, *}$ is onto for $n=k+\textsf{h}^\vee>0$. We observe that in general $f_{G, *}$ is onto for $n\neq 0$. 
%\begin{proposition}\label{onto}
%	The map $f_{G, *}: K_G^*(\text{pt})\to K_G^*(G, \tau^n)$ is onto for $n\neq 0$.
%\end{proposition}
%\begin{proof}
%	Consider the following commutative diagram.
%	\begin{eqnarray}
%		\xymatrix{K_T^*(\text{pt})\ar[r]^{p_*}\ar[d]^{f_{T, *}^0}&K_G^*(\text{pt})\ar[d]^{f_{G, *}}\\ K_T^*(T, i^*\tau)\ar[r]^{q_*}\ar[d]^{s_*}& K_G^*(G, \tau^n)\\ K_{T\rtimes W}^*(T, i^*\tau^n)\ar[ur]_{\omega_*}&}
%	\end{eqnarray}
%	Here $p_*$ and $s_*$ are induction maps, and $q_*$ is the pushforward map induced by $i$ followed by induction from $T$ to $G$. By Proposition \ref{FHTtorus}, $f_{T, *}^0$ is surjective. So is $\omega_*$ by \cite[Theorem 7.8]{FHT3}. By the discussions in \cite[\S6.2 and \S6.11]{FHT3}, $K_T^*(T, i^*\tau^n)$ (respectively $K_{T\rtimes W}^*(T, i^*\tau^n)$) has as a $\mathbb{Z}$-basis the $K$-theory classes represented by the distinguished conjugacy classes of $T$ corresponding to the orbits of the translation action of $\pi_1(T)$ (respectively the affine Weyl group action of $T\rtimes W$) on $\Lambda^*$ by $nB_G^\flat|_{\mathfrak{t}}(\pi_1(T))$ (respectively $nB_G^\flat|_{\mathfrak{t}}(\pi_1(T)\rtimes W)$), and the map $s_*$ can be thought of as the projection $\Lambda^*/\pi_1(T)$ to $\Lambda^*/(\pi_1(T)\rtimes W)$. Thus $s_*$ is surjective. By the commutativity of the above diagram, we have that $f_{G, *}$ is surjective. 
%\end{proof}
Let $B_G^\flat: \mathfrak{g}\to\mathfrak{g}^*$ be the linear transformation induced by the basic inner product on $\mathfrak{g}$. 
In this way, $i^*\tau^{n}$ can be seen as the group homomorphism $nB_{G}^\flat|_\mathfrak{t}$ (cf. \cite[Lemma 3.1]{M}). We then can identify $K_T^*(T, i^*\tau^{n})$ with $\displaystyle \frac{R(T)}{\left(e^\chi-1\left|\chi\in nB_{G}^\flat|_\mathfrak{t}(\pi_1(T))\right.\right)}$ as $R(T)$-algebras by Proposition \ref{FHTtorus}. To describe the restriction map $\displaystyle i^*: K_G^*(G, \tau^{n})\to K_T^*(T, i^*\tau^{n})$, it suffices to find out $i^*(W_0)$, as %$W_0$ is the identity element of the ring $K_G^*(G, \tau^n)$. 
for any $\mu\in\Lambda_k^*$, $i^*(W_\mu)=i^*(V_\mu\cdot W_0)=i^*(V_\mu)\cdot i^*(W_0)$, where $\cdot$ means module multiplication. We denote the image of $e^\chi\in R(T)$ under the quotient map $f_{T, *}^0: R(T)\to K_T^*(T, i^*\tau^{n})$ by $[e^\chi]$.
\begin{proposition}
	Let $n$ be a nonzero integer and $\rho$ be the half sum of positive roots. Then $\displaystyle i^*(W_0)=\left[(-1)^{|R_+|}J(e^\rho)\right]$, where $R_+$ is the set of positive roots and $\displaystyle J(e^\rho):=\sum_{w\in W}(-1)^w e^{w\cdot\rho}$ is the Weyl denominator. 
\end{proposition}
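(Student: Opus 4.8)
**Proof proposal for the final Proposition (computing $i^*(W_0)$).**

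The plan is to trace the class $W_0 = f_{G,*}(V_0) = f_{G,*}(1)$ through the restriction $i^*$ and identify its image inside $K_T^*(T, i^*\tau^n) \cong R(T)/(e^\chi - 1 \mid \chi \in nB_G^\flat|_{\mathfrak t}(\pi_1(T)))$. The key point is that $i^*$ intertwines the pushforward from a point into $G$ with the pushforward from a point into $T$, but only up to a correction arising from the normal bundle of $T$ in $G$ (equivalently, from the difference of the equivariant tangent data, which is where a Weyl-denominator-type Euler class must appear). Concretely, I would factor the inclusion of the identity $f_G\colon \mathrm{pt}\to G$ through $f_T\colon \mathrm{pt}\to T$ followed by $i\colon T\to G$, and compare $i^* \circ f_{G,*}$ with $f_{T,*}^0$ using the excess-intersection / base-change formula for twisted $K$-theoretic pushforwards along the Cartesian-type square relating $\mathrm{pt}\hookrightarrow T$ and $T\hookrightarrow G$.

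First I would recall from Proposition \ref{injective} (and the cited \cite[\S6.2]{FHT3}, \cite[Theorem 7.8]{FHT3}) that $i^*$ is the composite of the Weyl-map pullback $\omega^*\colon K_G^*(G,\tau^n)\to K_{T\rtimes W}^*(T, i^*\tau^n)$ with the forgetful restriction to $K_T^*(T, i^*\tau^n)$. Then I would use the fact that $\omega^*$ carries the pushforward $f_{G,*}$ of the identity to the pushforward along $G/T\times_W\{1\}\hookrightarrow G/T\times_W T$, which after passing to $T$-equivariant $K$-theory becomes the pushforward $f_{T,*}$ twisted by the $K$-theoretic Euler class of the normal bundle — but because $i^*$ uses $j^*$ (forgetting $G$ down to $T$) rather than pullback along the basepoint, the relevant Euler class is that of $\mathfrak g/\mathfrak t$ as a $T$-representation, namely $\prod_{\alpha\in R}(1-e^{\alpha})$ up to a unit, which collects into the Weyl denominator $J(e^\rho) = e^\rho\prod_{\alpha\in R_+}(1-e^{-\alpha}) = \sum_{w\in W}(-1)^w e^{w\cdot\rho}$. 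Tracking the sign: $\mathfrak g/\mathfrak t$ has complex dimension $|R_+|$ over the roots, and reconciling the two standard forms of the Weyl denominator (the antisymmetrization $\sum_w (-1)^w e^{w\rho}$ versus the product $e^\rho\prod_{\alpha>0}(1-e^{-\alpha})$) accounts for the factor $(-1)^{|R_+|}$.

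The main obstacle I anticipate is bookkeeping the twist data and the choice of Morita isomorphism / $c$-class under these pushforwards: since $W_0$ is defined via a specific pushforward $f_{G,*}$ and $i^*\tau^n$ is pinned down to the homomorphism $nB_G^\flat|_{\mathfrak t}$ via \cite[Lemma 3.1]{M}, I must check that the Euler-class correction is computed with the correct orientation and that no extra line-bundle twist (an element of the equivariant Picard group $H_T^2$) sneaks in — this is exactly the kind of ambiguity flagged in the exposition around $f_*^{c+2c'}(a) = f_*^c(a\cdot [L_{c'}])$. I would resolve it by working in the $T\rtimes W$-equivariant model where $G/T\times_W T$ appears, using that $H^2_{T\rtimes W}$ of the relevant spaces is controlled, and verifying on the level of the explicit Mayer–Vietoris / Steinberg-basis description from the preceding propositions that the image of the unit is $[(-1)^{|R_+|}J(e^\rho)]$; a consistency check is that this class should be a unit multiple of the identity after inverting the Weyl denominator, matching the classical Weyl integration picture, and that applying $\psi^{-1}$ (complex conjugation) sends it to the analogous class for $\tau^{-n}$, consistent with $\rho\mapsto\rho$ and $R_+\mapsto R_+$.
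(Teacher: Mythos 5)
Your overall strategy is the paper's: restrict along $T\hookrightarrow G$, recognize that the identity point meets $T$ cleanly so that an Euler class of $\mathfrak g/\mathfrak t$ appears, and account for a $\rho$-shift coming from the choice of $c$-class/Morita data. But as written the decisive steps are announced rather than carried out, and this is where the actual content lies. First, you never establish the normalization that the pushforward $i_*^0\colon K_T^*(T,i^*\tau^n)\to K_T^*(G,\tau^n)$ sends $[1]$ to the unit; in the paper this follows from the factorization $i_*^0\circ f_{T,*}^0=f_{G,*}^0=f_{G,*}\otimes_{R(G)}\mathrm{Id}_{R(T)}$ together with the surjectivity of $f_{T,*}^0$ from Proposition \ref{FHTtorus}, and it is exactly what ties the chosen Morita representatives for the two pushforwards together. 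Second, the $e^{-\rho}$ correction is not derived: the paper pins it down by taking $c=c_1(N)=\sum_{\alpha\in R_+}\alpha=2\rho$, so that $i_*^c([1])=i_*^0([1]\cdot[L_{c/2}])=e^\rho\cdot 1$, and then combining with the self-intersection identity $i^*\circ i_*^c=$ multiplication by $e(N)=\prod_{\alpha\in R_+}(1-e^\alpha)$ to get $i^*(1)=e^{-\rho}\bigl[\prod_{\alpha\in R_+}(1-e^\alpha)\bigr]$. You cite the relation $f_*^{c+2c'}(a)=f_*^c(a\cdot[L_{c'}])$ as the relevant tool but then propose to resolve the ambiguity by ``verifying that the image of the unit is $[(-1)^{|R_+|}J(e^\rho)]$'' in the $T\rtimes W$ model, which assumes the conclusion; the Weyl-map/excess-intersection detour is also unnecessary here (the paper uses $\omega^*$ only for the injectivity statement, Proposition \ref{injective}, not for this computation).

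Two smaller but genuine inaccuracies: the Euler class you quote, $\prod_{\alpha\in R}(1-e^\alpha)$ over all roots, is not a unit multiple of $J(e^\rho)$ — it equals $(-1)^{|R_+|}J(e^\rho)^2$ — so you must use the complex structure on $\mathfrak g/\mathfrak t$ given by the positive roots, i.e.\ $\prod_{\alpha\in R_+}(1-e^\alpha)$; and the sign $(-1)^{|R_+|}$ does not come from ``reconciling the two standard forms of the Weyl denominator'' (the antisymmetrized sum and $e^\rho\prod_{\alpha\in R_+}(1-e^{-\alpha})$ are equal, with no sign), but from the identity $e^{-\rho}\prod_{\alpha\in R_+}(1-e^{\alpha})=\prod_{\alpha\in R_+}(e^{-\alpha/2}-e^{\alpha/2})=(-1)^{|R_+|}J(e^\rho)$, i.e.\ from the orientation/complex-structure convention on $N$ interacting with the $e^{-\rho}$ factor. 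Until the normalization of $i_*^0$, the $[L_{c/2}]=e^\rho$ comparison, and this sign bookkeeping are actually proved, the argument does not yet yield the stated formula.
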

\begin{proof}
	%In general, if $c\in H^2(X, \mathbb{Z})$ and $f: X\to Y$ is any differentiable $c$-map, i.e. $c\equiv w_2(X)-f^*w_2(Y) (\text{mod }2)$, then one can define the $K$-theoretic pushforward $f_*^c: K^*(X, f^*\tau)\to K^*(Y, \tau)$ which depends on $c$ (\cite{AH}, \cite[Theorem 1.1]{CW}). Moreover, we have 
	%\[f_*^{c+2c'}(a)=f_*^c(a\cdot[L_{c'}])\]
	%where $[L_{c'}]\in K^0(X)$ is the $K$-theory class of the line bundle over $X$ whose first Chern class is $c'$. All these results admit natural generalizations in the equivariant setting, which we use in this proof. 
	
	Let $f_T:\text{pt}\to T$ and $f_G: \text{pt}\to G$ be the inclusion of the identity element into $T$ and $G$ respectively. Consider the maps
	\[K_T^*(\text{pt})\stackrel{f_{T, *}^0}{\longrightarrow}K_T^*(T, i^*\tau^{n})\stackrel{i_*^0}{\longrightarrow}K_T^*(G, \tau^{n}).\]
	The composition $i_*^0\circ f_{T, *}^0$ is $f_{G, *}^0\otimes_{R(G)}\text{Id}_{R(T)}$ and both maps $f_{T, *}^0$ and $f_{G, *}^0\otimes_{R(G)}\text{Id}_{R(T)}$ are surjective $R(T)$-algebra homomorphisms. It follows that $i_*^0([1])=1_{R(T)\otimes_{R(G)}K_G^*(G, \tau^n)}$. Next, consider the maps
	\[K_T^*(T, i^*\tau^{n})\stackrel{i_*^c}{\longrightarrow} K_T^*(G, \tau^{n})\stackrel{i^*}{\longrightarrow} K_T^*(T, i^*\tau^{n}), \]
	where $c$ is the first Chern class of the normal bundle $N$ of $T$ in $G$. The $K$-theory class of $N$ is $\displaystyle \sum_{\alpha\in R_+} e^\alpha\otimes 1\in R(T)\otimes K^*(T)\cong K_T^*(T)$. The $K$-theory class of the line bundle with the first Chern class $c$ is $\displaystyle e^{\sum_{\alpha\in R_+}\alpha}\otimes 1$. The composition $i^*\circ i_*^c$ is the multiplication by the $K$-theoretic Euler class $\displaystyle e(N)=\prod_{\alpha\in R_+}(1-e^\alpha)\otimes 1$. Thus 
	\[i^*\circ i_*^c([1])=\left[\prod_{\alpha\in R_+}(1-e^\alpha)\right].\]
	On the other hand, 
	\begin{align*}
		i_*^c([1])&=i_*^0([1]\cdot [L_{\frac{c}{2}}])\\
				&=e^\rho\cdot 1_{R(T)\otimes_{R(G)}K_G^*(G, \tau^n)}\ (\text{the multiplication is }R(T)\text{-module multiplication}).
	\end{align*}
	It follows that 
	\begin{align*}
		i^*(1_{R(T)\otimes_{R(G)}R_k(G)})&=e^{-\rho}\cdot i^*\circ i^c_*([1])\\
									&=e^{-\rho}\cdot\left[\prod_{\alpha\in R_+}(1-e^\alpha)\right]\\
									&=\left[\prod_{\alpha\in R_+}(e^{-\frac{\alpha}{2}}-e^{\frac{\alpha}{2}})\right]\\
									&=\left[(-1)^{|R_+|}J(e^\rho)\right].	
	\end{align*}
	This, together with the fact that the map $K_G^*(G, \tau^{n})\to K_T^*(G, \tau^{n})$ sends $W_0$ to $1_{R(T)\otimes_{R(G)}K_G^*(G, \tau^n)}$, finishes the proof.
	\end{proof}
	\begin{corollary}\label{antisym}
		$\displaystyle i^*(W_\mu)=\left[(-1)^{|R_+|}J(e^{\mu+\rho})\right]$.
	\end{corollary}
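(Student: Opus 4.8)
The plan is to reduce to the case $\mu=0$ established in the preceding Proposition, using that $W_\mu$ is obtained from $W_0$ by the $R(G)$-module action of $V_\mu$ and that this action is transported by $i^*$ to the $R(T)$-module structure on $K_T^*(T,i^*\tau^n)$ normalized by the bracket notation introduced above. First I would observe that the collapse map $\pi\colon G\to\mathrm{pt}$ satisfies $\pi\circ f_G=\mathrm{id}_{\mathrm{pt}}$, so that $f_G^*\pi^*V_\mu=V_\mu$ and the projection formula for the twisted pushforward $f_{G,*}$ gives
\[
W_\mu=f_{G,*}(V_\mu)=f_{G,*}\bigl(f_G^*\pi^*V_\mu\bigr)=\pi^*V_\mu\cdot f_{G,*}(1)=V_\mu\cdot W_0,
\]
where $V_\mu\in R(G)=K_G^*(\mathrm{pt})$ acts on $W_0$ through the natural $R(G)$-module structure on $K_G^*(G,\tau^n)$ (pullback along $\pi$ followed by the $K$-theory product).

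Next I would apply $i^*$. Since twisted $K$-theory is a module over untwisted $K$-theory and restriction maps respect this module structure together with the restriction of characters $R(G)\to R(T)$, $\,V_\mu\mapsto\mathrm{ch}(V_\mu)=\sum_\lambda m_\lambda e^\lambda$, we obtain
\[
i^*(W_\mu)=\mathrm{ch}(V_\mu)\cdot i^*(W_0)=\mathrm{ch}(V_\mu)\cdot\bigl[(-1)^{|R_+|}J(e^\rho)\bigr]=(-1)^{|R_+|}\bigl[\mathrm{ch}(V_\mu)\,J(e^\rho)\bigr],
\]
using the preceding Proposition for $i^*(W_0)$ and, in the last step, that $f_{T,*}^0$ is a ring homomorphism, so that the relation $e^{\chi_1}\cdot[e^{\chi_2}]=[e^{\chi_1+\chi_2}]$ extends to $\xi\cdot[\eta]=[\xi\eta]$ for all $\xi,\eta\in R(T)$. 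Finally, the Weyl character formula in $R(T)=\mathbb{Z}[\Lambda^*]$ reads $\mathrm{ch}(V_\mu)\cdot J(e^\rho)=J(e^{\mu+\rho})$, whence $i^*(W_\mu)=\bigl[(-1)^{|R_+|}J(e^{\mu+\rho})\bigr]$, as claimed.

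The computation is essentially formal; the one point demanding care — and the main potential pitfall — is the bookkeeping of the several module structures involved, namely verifying that the $R(G)$-action used to write $W_\mu=V_\mu\cdot W_0$ is genuinely intertwined by $i^*$ with an $R(T)$-action on $K_T^*(T,i^*\tau^n)$, and that this $R(T)$-action is precisely the one normalized by the bracket relation $e^{\chi_1}\cdot[e^{\chi_2}]=[e^{\chi_1+\chi_2}]$ recorded after Proposition \ref{FHTtorus}. Once those identifications are pinned down, nothing further is needed beyond the projection formula and the classical Weyl character formula.
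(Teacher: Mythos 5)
Your proposal is correct and follows essentially the same route as the paper: write $W_\mu=V_\mu\cdot W_0$ via the $R(G)$-module structure (the paper absorbs your projection-formula step into the statement that $f_{G,*}$ is an $R(G)$-module/ring homomorphism), use compatibility of $i^*$ with the module structures to reduce to the $\mu=0$ case, and conclude with the Weyl character formula. The extra care you take with the bracket relation $\xi\cdot[\eta]=[\xi\eta]$ is exactly the normalization the paper records after Proposition \ref{FHTtorus}, so no further comment is needed.
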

	\begin{proof}
		\begin{align*}
			i^*(W_\mu)&=i^*(V_\mu\cdot W_0)\ (\text{the multiplication is }R(G)\text{-module multiplication})\\
					&=i^*(V_\mu)\cdot i^*(W_0)\ (\text{the multiplication is }R(T)\text{-module multiplication})\\
					&=i^*(V_\mu)\cdot\left[(-1)^{|R_+|}J(e^\rho)\right]\\
					&=\left[(-1)^{|R_+|}J(e^{\mu+\rho})\right]\ \text{(by the Weyl Character Formula)}
		\end{align*}
	\end{proof}

\section{Adams operations: the equivariant twisted case}\label{adamsequivcase}
In this section we assume that $G$ is a simple, connected and simply-connected compact Lie group unless otherwise specified. 
%\subsection{Review of Adams operations} 
The Adams operation $\psi^\ell$ for $\ell\in\mathbb{Z}$ is a cohomological operation on $K$-theory. For a $G$-equivariant complex vector bundle $E$ over $X$, one can think of it as the direct sum of equivariant line bundles $\displaystyle\bigoplus_{i=1}^n L_i$ by the equivariant splitting principle. The Adams operation $\psi^\ell$ on $E$ for $\ell\geq 0$ is defined to be 
\[\psi^\ell(E)=\bigoplus_{i=1}^n L_i^{\otimes\ell}.\]
For $\ell<0$, $\psi^\ell(E)$ is defined to be $\psi^{-\ell}(E^*)=\bigoplus_{i=1}^\ell (L_i^{*})^{\otimes(-\ell)}$. Thus $\psi^\ell$ descends to a map on $K_G^0(G)$. By stipulating that $\psi^\ell$ commute with the suspension isomorphism, the Adams operation then is defined on the full $K$-theory ring $K_G^*(X)$. 

There is an equivalent formulation of Adams operations by means of representations of permutation groups, which can be generalized to twisted $K$-theory (\cite[\S2]{A}, \cite[\S10]{AS2}). Consider the representation group $R(S_\ell)$ of the permutation group $S_\ell$, $\ell>0$. A $\mathbb{Z}$-basis of $R(S_\ell)$, apart from the one consisting of irreducibles, is given by 
\[\{\text{Ind}_{S_{\alpha_1}\times S_{\alpha_2}\times\cdots\times S_{\alpha_r}}^{S_\ell}(\text{triv})|\alpha_1+\cdots+\alpha_r=\ell, \alpha_i\in\mathbb{N}, 1\leq i\leq r\leq \ell\}.\]
The tensor power $E^{\otimes\ell}$, which is naturally a $S_\ell$-representation through permutation of tensor factors, can be decomposed with respect to this basis into the direct sum 
\[\bigoplus_{\alpha\text{ a partition of }\ell}\text{Ind}_{S_{\alpha_1}\times S_{\alpha_2}\times\cdots\times S_{\alpha_r}}^{S_\ell}(\text{triv})\otimes E_\alpha.\]
The Adams operations $\psi^\ell(E)$ then is $E_\alpha$ where $\alpha$ is the partition $\ell$. This construction can be naturally extended to complexes of equivariant vector bundles and hence equivariant $K$-theory. For twisted $K$-theory $K_H^*(X, \tau)$, let $P$ be a projective Hilbert space bundle over $X$ whose Morita isomorphism class is $\tau$. One can similarly decompose $P^{\otimes \ell}$ with respect to the basis of induced representations and extract the subbundle $P_\ell$ corresponding to the partition $\ell$, which we denote by $\psi^\ell(P)$ and has Morita isomorphism class $\ell\tau$. The Adams operation $\psi^\ell: K_H^*(X, \tau)\to K_H^*(X, \ell\tau)$ for $\ell\geq 0$ then can be defined by the map on the representing spaces $\text{Fred}(P)\to\text{Fred}(\psi^\ell(P))$ induced by $\psi^\ell$. For $\ell<0$, define $\psi^\ell$ by requiring that $\psi^\ell(P):=\psi^{-\ell}(P^*)$.

%The Adams operation $\psi^\ell$ on twisted $K$-theory can be defined in a similar fashion to the untwisted case, by taking the subspace corresponding to the power sum symmetric polynomial $x_1^\ell+x_2^\ell+\cdots+x_n^\ell+\cdots$ in the decomposition of the $\ell$-th tensor power of a twisted Fredholm operator bundle (representing an element in the twisted $K$-theory) under the canonical action of the symmetric group (cf. \cite{AS2}). As taking the tensor product of twisted Fredholm operator bundles adds up the twist, $\psi^\ell$ is a map from $K_H^*(X, \tau)$ to $K_H^*(X, \ell\tau)$. 

Rather than sticking to the above definition of Adams operations, we will essentially make use of the following facts when computing the Adams operations on the equivariant twisted $K$-theory of compact Lie groups. Note that for $r\in R(H)$ and $\alpha\in K_H^*(X, \tau)$, we have that $\psi^\ell$ respects the $R(H)$-module structure of $K_H^*(X, \tau)$ in the sense that 
\begin{eqnarray}\label{adamsmodule}\psi^\ell(r\cdot\alpha)=\psi^\ell(r)\cdot\psi^\ell(\alpha).\end{eqnarray}
Thus Adams operations on $K_T^*(T, \tau)$ are completely determined by $\psi^\ell([1])$ as the pushforward map $f_{T, *}^0: R(T)\to K_T^*(T, \tau)$ is onto by Proposition \ref{FHTtorus}. 

\begin{lemma}
	The Adams operation $\psi^\ell: K_T^{\text{dim }T}(T, \tau)\to K_T^{\text{dim }T}(T, \ell\tau)$ satisfies
	\[\psi^\ell([1])=[1].\]
\end{lemma}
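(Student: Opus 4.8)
The plan is to reduce the claim about twisted $K$-theory of a torus to a direct computation on a circle factor, using the inductive structure already set up in the proof of Proposition \ref{FHTtorus}. Since the pushforward $f_{T,*}^0: R(T)\to K_T^{\dim T}(T,\tau)$ is a surjective ring homomorphism and $\psi^\ell$ respects the $R(T)$-module structure via \eqref{adamsmodule}, it suffices to understand how $\psi^\ell$ interacts with this pushforward. The key point is that $[1]=f_{T,*}^0(e^0)$ is the image of the trivial character, hence is the \emph{multiplicative} identity of the ring $K_T^{\dim T}(T,\tau)$ in degree $\dim T$; and the Adams operation $\psi^\ell$, being (a piece of) an $\ell$-th tensor power operation, sends the class of the trivial Fredholm bundle to itself. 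So the heart of the matter is to identify $[1]$ explicitly enough that this tautology can be made rigorous in the twisted setting.

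Concretely, I would proceed as follows. First, recall from Proposition \ref{FHTtorus} that $K_T^{\dim T}(T,\tau)\cong R(T)/(e^\chi-1\mid \chi\in p(\pi_1(T)))$ as an $R(T)$-algebra, with $f_{T,*}^0$ the quotient map, so that $[1]$ is literally the unit $1$ of this quotient ring. Next, I would observe that $\psi^\ell$ is multiplicative for the Pontryagin product up to the shift of twist: it sends the ring $K_T^{\dim T}(T,\tau)$ to $K_T^{\dim T}(T,\ell\tau)$, and under the identifications of Proposition \ref{FHTtorus} both rings are quotients of $R(T)$ (note $\ell\tau$ corresponds to $\ell p$, and $\ell p(\pi_1(T))\subseteq p(\pi_1(T))$, so there is a natural surjection between the quotient rings). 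Because $\psi^\ell$ is a ring homomorphism with respect to the Pontryagin product, it must send the unit to the unit; therefore $\psi^\ell([1])=[1]$. Alternatively, and more in the spirit of the torus computations already done, one can trace through the Mayer--Vietoris induction on the subtori $T_r$: the class $[1]\in K_T^r(T_r,\tau|_{T_r})$ is the pushforward of the trivial $K_T^0(\mathrm{pt})$-class along the inclusion of the identity, $\psi^\ell$ commutes with this pushforward up to the appropriate $c$-twist correction (which here is trivial since the relevant $H^2_T$ correction terms vanish for the linear twists $p$ involved), and $\psi^\ell(1)=1$ in $R(T)\cong K_T^0(\mathrm{pt})$; an induction on $r$ then yields the claim for $T=T_n$.

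The main obstacle I anticipate is not the algebra but the compatibility bookkeeping: one must be careful that $\psi^\ell$ genuinely commutes with the chosen pushforward $f_{T,*}^0$ (i.e. with the pushforward associated to the \emph{specific} Morita equivalence class, corresponding to the trivial $T$-Hilbert bundle, used to define the Pontryagin product and the ring structure), and that no anomalous line-bundle factor $[L_{c'}]$ is introduced when the twist is multiplied by $\ell$. The cleanest way to handle this is to note that $\psi^\ell$ commutes with pushforward along the inclusion of a point precisely because that inclusion map carries no $w_2$-discrepancy over a point (the relevant $H^2_T(\mathrm{pt},\mathbb{Z})$ vanishes, so there is a \emph{canonical} pushforward), and $\psi^\ell$ of the unit class $1\in R(T)=K_T^{\dim T - \dim T}(\mathrm{pt})$ is again $1$ since the trivial one-dimensional representation is fixed by every Adams operation. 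Once this compatibility is pinned down, $\psi^\ell([1]) = \psi^\ell(f_{T,*}^0(1)) = f_{T,*}^0(\psi^\ell(1)) = f_{T,*}^0(1) = [1]$, and the proof is complete.
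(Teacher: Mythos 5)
Both routes you offer rest on the same unproved claim: that $\psi^\ell$ commutes with the canonical pushforward $f_{T,*}^0$ (equivalently, in your first formulation, that $\psi^\ell$ is a unit-preserving ring homomorphism for the Pontryagin product). Your justification — that $H^2_T(\mathrm{pt},\mathbb{Z})=0$, so the pushforward is canonical and carries no $[L_{c'}]$ ambiguity — addresses only the well-definedness of $f_{T,*}^0$, not its compatibility with Adams operations. Adams operations do not commute with Gysin maps in general; the failure is measured by cannibalistic-class-type corrections, and the paper itself provides the counterexample to your principle: the pushforward $f_{G,*}\colon K_G^*(\mathrm{pt})\to K_G^*(G,\tau^n)$ is equally canonical (the paper notes $H^2_G(\mathrm{pt},\mathbb{Z})=0$), yet by Theorem \ref{adamsequiv} one has $\psi^\ell(W_0)=\psi^\ell(f_{G,*}(V_0))=W_{(\ell-1)\rho}$, not $f_{G,*}(\psi^\ell(V_0))=W_0$. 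So $\psi^\ell$ is in general neither unit-preserving for the Pontryagin product nor compatible with canonical pushforwards; the $\ell^{|R_+|}$ in Theorem \ref{adamsnonequiv} is another trace of exactly this failure. For the torus the commutation happens to hold, but that is precisely the content of the lemma, so your argument is circular: it assumes what is to be proved.

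What the paper actually does, and what is missing from your sketch, is to avoid the pushforward altogether and run the induction on the subtori $T_r$ through the Mayer--Vietoris boundary map $\partial$ from the proof of Proposition \ref{FHTtorus}: one checks that $[1]\in K_T^{r+1}(T_{r+1},\tau|_{T_{r+1}})$ equals $\partial(0,[1])$, identifies $\partial$ as a (twisted) suspension isomorphism, and uses the fact that Adams operations commute with suspension (though not with Bott periodicity, nor with pushforwards in general — see the remark following the lemma), so that $\psi^\ell([1])=\partial(0,\psi^\ell([1]))=\partial(0,[1])=[1]$ by the inductive hypothesis. You do gesture at the Mayer--Vietoris induction, but inside it you again invoke commutation with the pushforward, justified only by vanishing $H^2_T$ correction terms, rather than commutation with the boundary/suspension map; so the gap persists in that version too. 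The special feature of the torus — that the relevant normal directions contribute no correction — is exactly what the suspension identification establishes, and it cannot be taken as an input.
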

\begin{proof}
	As in the proof of Proposition \ref{FHTtorus}, we will show more generally that for $0\leq r\leq n$, the Adams operation $\psi^\ell: K_T^r(T_r, \tau|_{T_r})\to K_T^r(T_r, \ell\tau|_{T_r})$ satisfies $\psi^\ell([1])=[1]$, using induction. In the base case $r=0$, $[1]\in K_T^0(\text{pt})$ is just the trivial representation and so the lemma is obviously true in this case. For the inductive step, consider the boundary map $\partial$ of the Mayer-Vietoris sequence for $K_T^{r+1}(T_{r+1}, \tau|_{T_{r+1}})$ in the proof of Proposition \ref{FHTtorus}. We have
	\[\partial([e^\chi], [e^{\chi'}])=\det\begin{pmatrix}[1]& [1]\\ [e^\chi]& [e^{\chi'}]\end{pmatrix}=[e^{\chi'}-e^\chi]\]
	(or equivalently $\displaystyle \det\begin{pmatrix}[e^\chi]& [e^{\chi'}]\\ -[e^{\chi_{r+1}}]& -[1]\end{pmatrix}=[e^{\chi'}\cdot e^{\chi_{r+1}}-e^\chi]=[e^{\chi'}-e^\chi]$, the last equation due to $[e^{\chi_{r+1}}-1]=0$ in $K_T^{r+1}(T_r, \tau|_{T_{r+1}})$). In particular, $\partial(0, 1)=[1]$. In fact, $\partial$ can be understood as the `twisted' suspension isomorphism between 
	\[K_{T}^{r+1}(T_{r+1}, \tau|_{T_{r+1}})\text{ and } \left(K_{T}^r(T_r\times\mathcal{W}, \tau|_{T_r\times\mathcal{W}})\oplus K_{T}^r(T_r\times\mathcal{E}, \tau|_{T_r\times\mathcal{E}})\right)\left/\text{Im}(r^*)\right.\] 
	(when $\tau$ is trivial, $\partial$ is exactly the suspension isomorphism between $K_{T}^r(T_r)$ and $K_{T}^{r+1}(T_{r+1})$). By definition Adams operations and the suspension isomorphism commute, so 
	\begin{align*}
		\psi^\ell([1])&=\psi^\ell(\partial(0, [1]))\\
				&=\partial(\psi^\ell(0, [1]))\\
				&=\partial(0, \psi^\ell([1]))\\
				&=\partial(0, [1])\ (\text{by the inductive hypothesis})\\
				&=[1].
	\end{align*}
\end{proof}
\begin{remark}
	Though Adams operations commute with suspension isomorphism, they do not commute with Bott periodicity. More precisely, if $\mu\in K^{-2}_H(\text{pt})$ is the Bott generator and $x\in K_H^*(X, \tau)$, then $\psi^\ell(\mu\cdot x)=\ell\mu\cdot\psi^\ell(x)$.
\end{remark}
\begin{corollary}\label{adamsT}
	More generally, the Adams operation $\psi^\ell: K_T^{\text{dim }T}(T, \tau)\to K_T^{\text{dim }T}(T, \ell\tau)$ satisfies
	\[\psi^\ell([e^\chi])=[e^{\ell\chi}].\]
\end{corollary}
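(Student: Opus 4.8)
The plan is to deduce this directly from the preceding Lemma together with the compatibility \eqref{adamsmodule} of $\psi^\ell$ with the $R(T)$-module structure. First I would unwind the notation. By definition $[e^\chi]=f_{T,*}^0(e^\chi)$, and by Proposition \ref{FHTtorus} the pushforward $f_{T,*}^0\colon R(T)\to K_T^{\dim T}(T,\tau)$ is a homomorphism of $R(T)$-modules (indeed of $R(T)$-algebras). Hence, writing the outer products for the $R(T)$-module action on $K_T^{\dim T}(T,\tau)$,
\[
[e^\chi]=f_{T,*}^0(e^\chi\cdot 1)=e^\chi\cdot f_{T,*}^0(1)=e^\chi\cdot[1].
\]

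Next I would apply \eqref{adamsmodule} with $r=e^\chi\in R(T)$ and $\alpha=[1]\in K_T^{\dim T}(T,\tau)$, obtaining
\[
\psi^\ell([e^\chi])=\psi^\ell(e^\chi\cdot[1])=\psi^\ell(e^\chi)\cdot\psi^\ell([1]).
\]
Here $\psi^\ell([1])=[1]$ by the preceding Lemma, and $\psi^\ell(e^\chi)=e^{\ell\chi}$ in $R(T)$ by the classical computation of Adams operations on a representation ring: a one-dimensional representation is sent to its $\ell$-th tensor power, i.e.\ $\chi\mapsto\ell\chi$ on characters for $\ell>0$; for $\ell=0$ one has $\psi^0=\varepsilon$, so $\psi^0(e^\chi)=1=e^{0\cdot\chi}$; and for $\ell<0$ one composes with $\psi^{-1}$ (complex conjugation), so the formula $\psi^\ell(e^\chi)=e^{\ell\chi}$ holds for all $\ell\in\mathbb{Z}$. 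Combining the two factors gives $\psi^\ell([e^\chi])=e^{\ell\chi}\cdot[1]=[e^{\ell\chi}]$, which is the assertion.

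This argument has no real obstacle: the substantive work was already carried out in the Lemma (the case $\chi=0$), and the only point that needs care is to keep track of the fact that the product appearing in \eqref{adamsmodule} is the $R(T)$-module action and not the Pontryagin product — which is precisely why the identification $[e^\chi]=e^\chi\cdot[1]$, valid because $f_{T,*}^0$ is an $R(T)$-module map, is the correct input. If one prefers, the whole corollary can be stated and proved simultaneously with the Lemma by the same Mayer--Vietoris induction, replacing $[1]$ by $[e^\chi]$ throughout and using $\partial([e^\chi],[e^{\chi'}])=[e^{\chi'}-e^\chi]$, but the two-line module argument above seems cleanest.
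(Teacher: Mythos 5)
Your argument is correct and is essentially identical to the paper's proof: the paper likewise writes $[e^\chi]=e^\chi\cdot[1]$, applies Equation (\ref{adamsmodule}) together with the Lemma $\psi^\ell([1])=[1]$ and the standard fact $\psi^\ell(e^\chi)=e^{\ell\chi}$ in $R(T)$, and concludes $\psi^\ell([e^\chi])=[e^{\ell\chi}]$. Your added remarks (justifying $[e^\chi]=e^\chi\cdot[1]$ via the $R(T)$-module property of $f_{T,*}^0$ and checking the cases $\ell\le 0$ of $\psi^\ell(e^\chi)=e^{\ell\chi}$) are fine but only make explicit what the paper leaves implicit.
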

\begin{proof}
	By Equation (\ref{adamsmodule}), we have $\displaystyle\psi^\ell([e^\chi])=\psi^\ell(e^\chi\cdot [1])=\psi^\ell(e^\chi)\cdot\psi^\ell([1])=e^{\ell\chi}\cdot[1]=[e^{\ell\chi}]$.
\end{proof}
\begin{proof}[Proof of Theorem \ref{adamsequiv}] 

%Now we are ready to prove the description of Adams operations in the equivariant twisted case (Theorem \ref{adamsequiv}). 
For $|n|\geq\textsf{h}^\vee$ and $\ell>0$, we have
	\begin{align*}
		i^*\psi^\ell(W_\mu)&=\psi^\ell(i^*(W_\mu))\\
						&=\psi^\ell\left(\left[(-1)^{|R_+|}J(e^{\mu+\rho})\right]\right)\ (\text{by Corollary \ref{antisym}})\\
						&=\left[(-1)^{|R_+|}J(e^{\ell(\mu+\rho)})\right]\ (\text{by Corollary \ref{adamsT}})\\
						&=i^*(W_{\ell(\mu+\rho)-\rho}).
	\end{align*}
	Comparing both ends of the string of equalities and noting that $i^*$ is injective, we get the desired conclusion. For $|n|\geq \textsf{h}^\vee$ and $\ell=-1$, note that, on the one hand, 
\begin{align*}
	&\psi^{-1}(i^*(W_\mu))\\
	=&\psi^{-1}(\left[(-1)^{|R_+|}J(e^{\mu+\rho})\right])\ (\text{by Corollary \ref{antisym}})\\
	=&\left[(-1) ^{|R_+|}J(e^{-\mu-\rho})\right]\ (\text{By Corollary \ref{adamsT}})\\
	=&\left[(-1) ^{|R_+|+\text{sgn}(w_0)}J(e^{w_0(-\mu-\rho)})\right]\\
	=&\left[(-1) ^{|R_+|+\text{sgn}(w_0)}J(e^{\mu^*+\rho})\right].
\end{align*}
On the other hand, by the functoriality of $\psi^\ell$, $\psi^{-1}(i^*(W_\mu))=i^*(\psi^{-1}(W_\mu))$. By Corollary \ref{antisym} again, we have $\psi^{-1}(W_\mu)=(-1)^{\text{sgn}(w_0)}W_{\mu^*}$. The case $|n|\geq \textsf{h}^\vee$ and $\ell<0$ follows by applying the formulas for $\psi^{-\ell}$ and $\psi^{-1}$ we have just got to the equation $\psi^\ell=\psi^{-1}\circ\psi^{-\ell}$. Observing that $J(e^0)=0$, we have that the Adams operation $\psi^0$ is the zero map. Alternatively, note that $K_G^*(G, \tau^n)$ is a torsion $R(G)$-module and $K_G^*(G)$ is a free $R(G)$-module (cf. \cite{BZ}), and any moodule homomorphism from a torsion module to a free module must be the zero map. Finally, The case $|n|<\textsf{h}^\vee$ and $n\neq 0$ follows from the fact that the Verlinde algebra $R_{k}(G)=0$ for $1-\textsf{h}^\vee\leq k\leq -1$ (i.e. $0<n<\textsf{h}^\vee$) and Corollary \ref{zerok} which says that $K_G^*(G, \tau^n)\cong K_G^*(G, \tau^{-n})$.
\end{proof}
\begin{remark}
	The formula for Adams operations on $K_G^*(G, \tau^{n})$ is in stark contrast to that on the untwisted $K$-theory, equivariant or non-equivariant, in terms of complexity (cf. \cite[Theorem 1.2, Proposition 2.6]{F}). While the formula for the twisted equivariant $K$-theory amounts to a simple scaling of the highest weight with a $\rho$-correction, the formula for the untwisted $K$-theory involves complicated combinatorial coefficients of canonical generators. 
\end{remark}
\begin{remark}
	It is interesting to note that, in untwisted $K$-theory and ordinary representation rings, the Adams operation $\psi^0$ is the augmentation map, giving the rank of vector bundles or the dimension of representations, whereas $\psi^0$ on the twisted $K$-theory of $G$ is the zero map.
\end{remark}
As noted in the Introduction, the Adams operation $\psi^{-1}$, which is induced by complex conjugation, is an involutive isomorphism because $\psi^{-1}\circ\psi^{-1}=\psi^1$, which is the identity map. Thus through $\psi^{-1}$ $K_G^*(G, \tau^n)$ is isomorphic to $K_G^*(G, \tau^{-n})$ (as abelian groups). We shall further deduce that $K_G^*(G, \tau^n)$ for $n<0$, as a ring with the Pontryagin product, is also isomorphic to a Verlinde algebra in the following
\begin{corollary}\label{zerok}
	Let $n<0$. Then the pushforward map $f_{G, *}: R(G)\cong K_G^*(\text{pt})\to K_G^{*+\text{dim }G}(G, \tau^n)$ induced by the inclusion of the identity element of $G$ into $G$ is a ring epimorphism, with kernel being the level $|n|-\textsf{h}^\vee$ Verlinde ideal $I_{|n|-\textsf{h}^\vee}$. In other words, $K_G^{\text{dim }G}(G, \tau^n)$ is isomorphic to the level $|n|-\textsf{h}^\vee$ Verlinde algebra $R_{|n|-\textsf{h}^\vee}(G)$ and $K_G^{\text{dim }G+1}(G, \tau^n)=0$.
\end{corollary}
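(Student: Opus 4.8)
The plan is to transfer the Freed--Hopkins--Teleman description from the twist $\tau^{-n}$, which has positive level because $-n>0$, back to $\tau^{n}$ by means of the involution $\psi^{-1}$. The key point is that $\psi^{-1}$, being complex conjugation, satisfies $\psi^{-1}\circ\psi^{-1}=\psi^{1}=\mathrm{id}$ and hence, independently of Theorem~\ref{adamsequiv}, is a degree-preserving isomorphism $\psi^{-1}\colon K_G^{*}(G,\tau^{n})\to K_G^{*}(G,\tau^{-n})$. Together with Theorem~\ref{FHT1} applied to $\tau^{-n}$ (legitimate for every $n<0$, and returning $0$ on both sides when $0<-n<\textsf{h}^\vee$, since $R_{k}(G)=0$ for $1-\textsf{h}^\vee\le k\le-1$) this already gives $K_G^{\dim G}(G,\tau^{n})\cong R_{|n|-\textsf{h}^\vee}(G)$ as abelian groups and $K_G^{\dim G+1}(G,\tau^{n})=0$. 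What remains is to realize this isomorphism through $f_{G,*}$ and to identify its kernel.

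I would use two observations. First, $\psi^{-1}$ is $R(G)$-semilinear: $\psi^{-1}(r\cdot a)=\psi^{-1}(r)\cdot\psi^{-1}(a)$ by \eqref{adamsmodule}, and on $R(G)=K_G^{*}(\mathrm{pt})$ the operation $\psi^{-1}$ is complex conjugation, the ring automorphism $c\colon V_\mu\mapsto V_{\mu^{*}}$. Second, $\psi^{-1}(W_{0})=(-1)^{|R_{+}|}W_{0}$, i.e.\ $\psi^{-1}$ carries the unit of $K_G^{*}(G,\tau^{n})$ to that of $K_G^{*}(G,\tau^{-n})$ up to the sign $(-1)^{|R_{+}|}$; this follows by applying the restriction $i^{*}$, invoking Corollaries~\ref{antisym} and~\ref{adamsT}, using $J(e^{-\rho})=(-1)^{|R_{+}|}J(e^{\rho})$ (since $w_{0}\rho=-\rho$ and $w_{0}$ has sign $(-1)^{|R_{+}|}$), and then the injectivity of $i^{*}$ (Proposition~\ref{injective}); it is also the $\mu=0,\ \ell=-1$ case of Theorem~\ref{adamsequiv}, whose proof for $|n|\ge\textsf{h}^\vee$ does not use the present corollary. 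Combining the two observations, for $V\in R(G)$ we get $\psi^{-1}(f_{G,*}(V))=\psi^{-1}(V\cdot W_{0})=c(V)\cdot\psi^{-1}(W_{0})=(-1)^{|R_{+}|}f_{G,*}(c(V))$, that is
\[
\psi^{-1}\circ f_{G,*}^{\,\tau^{n}} \;=\; (-1)^{|R_{+}|}\, f_{G,*}^{\,\tau^{-n}}\circ c \colon\ R(G)\longrightarrow K_G^{\dim G}(G,\tau^{-n}).
\]

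To conclude: since $-n>0$, Theorem~\ref{FHT1} says $f_{G,*}^{\,\tau^{-n}}$ is a surjective ring homomorphism with kernel the Verlinde ideal $I_{|n|-\textsf{h}^\vee}$. Because $\psi^{-1}$, $c$ and multiplication by $(-1)^{|R_{+}|}$ are all bijective, the displayed identity forces $f_{G,*}^{\,\tau^{n}}$ to be surjective, with
\[
\ker f_{G,*}^{\,\tau^{n}} \;=\; c^{-1}\bigl(\ker f_{G,*}^{\,\tau^{-n}}\bigr) \;=\; c\bigl(I_{|n|-\textsf{h}^\vee}\bigr) \;=\; I_{|n|-\textsf{h}^\vee},
\]
the last equality because the Verlinde ideal is conjugation-invariant---the dual of a level $k$ irreducible representation is again of level $k$, so $c$ permutes the standard basis of $R_{k}(G)$ and hence descends to it, giving $c(I_{k})=I_{k}$. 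Finally $f_{G,*}^{\,\tau^{n}}$ is itself a ring homomorphism for the Pontryagin product (from $m\circ(f_{G}\times f_{G})=f_{G}$ together with the vanishing of $H_G^{2}(\mathrm{pt},\mathbb{Z})$ and $H_G^{2}(G\times G,\mathbb{Z})$, which make the Morita and Thom-class choices unique), so $K_G^{\dim G}(G,\tau^{n})\cong R(G)/I_{|n|-\textsf{h}^\vee}=R_{|n|-\textsf{h}^\vee}(G)$ as rings, and $K_G^{\dim G+1}(G,\tau^{n})=0$.

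The diagram chase, the evaluation of $\psi^{-1}(W_{0})$, and the multiplicativity of $f_{G,*}$ are all routine. The one step calling for genuine (if standard) input is the conjugation-invariance $c(I_{|n|-\textsf{h}^\vee})=I_{|n|-\textsf{h}^\vee}$ of the Verlinde ideal---equivalently, that complex conjugation of positive-energy loop group representations preserves the level---and I expect this to be the only real obstacle; everything else is formal once one has the functoriality and $R(G)$-semilinearity of $\psi^{-1}$ and Theorem~\ref{FHT1}.
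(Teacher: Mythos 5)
Your proposal is correct and follows essentially the same route as the paper: both transfer Theorem \ref{FHT1} from $\tau^{-n}$ to $\tau^{n}$ through the involution $\psi^{-1}$ via the intertwining identity $\psi^{-1}\circ f_{G,*}^{\tau^{n}}=(-1)^{|R_+|}\,f_{G,*}^{\tau^{-n}}\circ\psi^{-1}$ (the paper writes the sign as $(-1)^{\mathrm{sgn}(w_0)}$, which agrees since $w_0$ has length $|R_+|$), and both conclude with the conjugation-invariance of the Verlinde ideal. The only cosmetic differences are that the paper gets the intertwining relation by quoting the $\ell=-1$ case of Theorem \ref{adamsequiv} for all $W_\mu$, whereas you rederive $\psi^{-1}(W_0)$ via $i^*$ and extend by $R(G)$-semilinearity, and the paper cites \cite[Lemma 5.13]{F2} for $c(I_{|n|-\textsf{h}^\vee})=I_{|n|-\textsf{h}^\vee}$ rather than sketching it.
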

\begin{proof}
	To avoid confusion we use $f_{G, *}^\pm$ to denote the pushforward map $K_G^*(\text{pt})\to K_G^{*+\text{dim }G}(G, \tau^{\mp n})$ (the superscript indicates the sign of the twist of the codomain). Note that, for $V_\mu\in K_G^*(\text{pt})$, 
	\begin{align*}
		f_{G, *}^+(\psi^{-1}(V_\mu))&=f_{G, *}^+(V_{\mu^*})\\
							&=W_{\mu^*}\in K_G^*(G, \tau^{-n})\\
		\psi^{-1}(f_{G, *}^-(V_\mu))&=\psi^{-1}(W_\mu)\\
							&=(-1)^{\text{sgn}(w_0)}W_{\mu^*}\in K_G^*(G, \tau^{-n})\ (\text{by Theorem \ref{adamsequiv}}).
	\end{align*}
	It follows that $\psi^{-1}\circ f_{G, *}^-=(-1)^{\text{sgn}(w_0)}f_{G, *}^+\circ \psi^{-1}$ on $K_G^*(\text{pt})$. As $\psi^{-1}$ is an involutive isomorphism and $f_{G, *}^+$ is onto by Theorem \ref{FHT1}, $f_{G, *}^-$ is onto as well. The kernel of $f_{G, *}^-$ is $\psi^-(\text{ker}(f_{G, *}^+))=\psi^{-1}(I_{|n|-\textsf{h}^\vee})$, which is just $I_{|n|-\textsf{h}^\vee}$ as the Verlinde ideal is invariant under complex conjugation (cf. \cite[Lemma 5.13]{F2}). This finishes the proof of the corollary.
\end{proof}

\section{Nonequivariant twisted $K$-theory}\label{nonequivtwist}
	The nonequivariant twisted $K$-theory $K^*(G, \tau^n)$ has been studied via various different approaches, namely, via the Freed-Hopkins-Teleman Theorem and K\"unneth spectral sequence (cf. \cite{Bra}), twisted $\text{Spin}^c$ bordism and Rothenberg-Steenrod spectral sequence (cf. \cite{D}), and twisted Segal spectral sequence applied to some classical Lie groups which can be realized as successive fiber bundles (\cite{MR}). It turns out that 
	\begin{theorem}[\cite{Bra, D}]\label{nonequiv}
		Let $r=\text{rank }G$ and $|n|\geq \textsf{h}^\vee$. The nonequivariant twisted $K$-theory $K^*(G, \tau^n)$ is isomorphic, as a ring equipped with the Pontryagin product, to the exterior algebra $\displaystyle\bigwedge\nolimits^*_{\mathbb{Z}/c(G, n)}(\eta_1, \cdots, \eta_{r-1})$. Here $c(G, n)$ is the order of the cokernel of the augmentation map $I_{|n|-\textsf{h}^\vee}\to\mathbb{Z}$, which can be found in \cite[Equation (30) and Table 1]{Bra} or \cite[Theorem 1.2]{D}. The isomorphism takes the even degree elements in the exterior algebra to $K^{\text{dim }G}(G, \tau^n)$ and the odd degree elements to $K^{\text{dim }G+1}(G, \tau^n)$. 
	\end{theorem}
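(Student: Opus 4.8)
The plan is to transport the equivariant computation of Theorem \ref{FHT1} and Corollary \ref{zerok} to the non-equivariant setting by a twisted K\"unneth spectral sequence, the heart being an explicit $\mathrm{Tor}$-computation over $R(G)$. First I would record the equivariant input: for $n>0$ Theorem \ref{FHT1} identifies $K_G^*(G,\tau^n)$, as a ring under the Pontryagin product, with the level-$k$ Verlinde algebra $R_k(G)=R(G)/I_k$, $k:=|n|-\textsf{h}^\vee$, and for $n<0$ the same holds by Corollary \ref{zerok}; the hypothesis $|n|\ge\textsf{h}^\vee$ guarantees $k\ge 0$, so $R_k(G)\ne 0$, and by the same results $K_G^*(G,\tau^n)$ is concentrated in $K$-degree $\dim G\bmod 2$. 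To pass to $K^*(G,\tau^n)$, let $G$ act on $G\times G$ by conjugation on the first factor and left translation on the second; this diagonal action is free with quotient $G$ via $(x,y)\mapsto y^{-1}xy$, the twist $\mathrm{pr}_1^*\tau^n$ descends to $\tau^n$, and so $K_G^*(G\times G,\mathrm{pr}_1^*\tau^n)\cong K^*(G,\tau^n)$. Since $\pi_1(G)=0$, the twisted K\"unneth (Hodgkin) spectral sequence (cf.\ \cite{Bra}) then reads
\[
E_2=\mathrm{Tor}^{R(G)}_{*}\big(K_G^*(G,\tau^n),\,\mathbb{Z}\big)\cong\mathrm{Tor}^{R(G)}_{*}\big(R_k(G),\,\mathbb{Z}\big)\ \Longrightarrow\ K^*(G,\tau^n),
\]
where $\mathbb{Z}=K_G^*(G_{\mathrm{transl}})$ is an $R(G)$-module via the augmentation $\epsilon\colon R(G)\to\mathbb{Z}$, and the homological $\mathrm{Tor}$-degree shifts the $\mathbb{Z}/2$-grading.

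For the $E_2$-page I would use two structural facts: (i) for simply-connected $G$ one has $R(G)\cong\mathbb{Z}[x_1,\dots,x_r]$ on the fundamental representations, $r=\mathrm{rank}\,G$ (Chevalley); and (ii) the Verlinde ideal $I_k\subset R(G)$ is a complete intersection, i.e.\ generated by a regular sequence $f_1,\dots,f_r$ of length equal to its height $r$ --- the ``fusion-potential'' presentation of the Verlinde algebra (cf.\ \cite{Bra,D}). Given (ii) the Koszul complex on $f_1,\dots,f_r$ is a free $R(G)$-resolution of $R_k(G)$, so
\[
\mathrm{Tor}^{R(G)}_*(R_k(G),\mathbb{Z})\cong H_*\!\big(\mathrm{Koszul}(f_1,\dots,f_r;R(G))\otimes_{R(G)}\mathbb{Z}\big)\cong H_*\!\big(\mathrm{Koszul}(\epsilon(f_1),\dots,\epsilon(f_r);\mathbb{Z})\big).
\]
The Koszul complex of a tuple of integers depends, up to isomorphism of complexes, only on the induced linear form $\mathbb{Z}^r\to\mathbb{Z}$ up to $GL_r(\mathbb{Z})$; by Smith normal form that form is equivalent to $(c,0,\dots,0)$, where $c:=\gcd(\epsilon(f_1),\dots,\epsilon(f_r))$, and hence
\[
\mathrm{Tor}^{R(G)}_*(R_k(G),\mathbb{Z})\cong H_*\!\big(\mathbb{Z}\xrightarrow{\,c\,}\mathbb{Z}\big)\otimes_{\mathbb{Z}}\textstyle\bigwedge^*(\mathbb{Z}^{r-1})\cong\bigwedge\nolimits^*_{\mathbb{Z}_{c}}(\eta_1,\dots,\eta_{r-1})
\]
as graded rings, each $\eta_i$ in homological degree $1$. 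Since $\epsilon$ is a surjective ring map, $\gcd(\epsilon(f_i))$ generates the ideal $\epsilon(I_k)\subseteq\mathbb{Z}$, so $c=|\mathrm{coker}(\epsilon\colon I_k\to\mathbb{Z})|=c(G,n)$.

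It remains to see the spectral sequence collapses and to read off the grading. The $\mathrm{Tor}$-algebra is generated in homological degrees $0$ and $1$; the degree-$1$ generators support no differential (targets lie in negative degree) and, by multiplicativity and the Leibniz rule, are never hit, so all classes are permanent cycles and $E_2=E_\infty$. Because $K_G^*(G,\tau^n)$ sits in $K$-degree $\dim G\bmod 2$ and $\mathrm{Tor}_p$ shifts parity by $p$, the even-degree (resp.\ odd-degree) exterior classes land in $K^{\dim G}(G,\tau^n)$ (resp.\ $K^{\dim G+1}(G,\tau^n)$), as claimed. The residual additive and multiplicative extension problems are settled using the Pontryagin ring structure and the known structure of the associated graded $\mathrm{Tor}$-algebra, giving the ring isomorphism $K^*(G,\tau^n)\cong\bigwedge\nolimits^*_{\mathbb{Z}_{c(G,n)}}(\eta_1,\dots,\eta_{r-1})$. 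I expect the main obstacle to be input (ii): the complete-intersection property of the Verlinde ideal is precisely what forces an exterior algebra on exactly $r-1$ generators and pins down the coefficient ring as $\mathbb{Z}_{c(G,n)}$, whereas the construction and collapse of the spectral sequence are formal and the Koszul and extension arguments are routine once (ii) is available.
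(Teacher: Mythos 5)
Your proposal follows essentially the same route as the paper's recollection of Braun's argument: identify $K^*(G,\tau^n)$ with $K_G^*(G_{\mathrm{Ad}}\times G_{\mathrm{L}},\pi_1^*\tau^n\otimes 1)$, feed the Freed--Hopkins--Teleman/Corollary \ref{zerok} computation of $K_G^*(G,\tau^n)\cong R_{|n|-\textsf{h}^\vee}(G)$ into the K\"unneth spectral sequence, resolve by the Koszul complex on a regular sequence generating the Verlinde ideal, and read off $\mathrm{Tor}_{R(G)}^*(R_{|n|-\textsf{h}^\vee}(G),\mathbb{Z})$ as $\bigwedge\nolimits^*_{\mathbb{Z}_{c(G,n)}}(\eta_1,\dots,\eta_{r-1})$ with the stated degree placement. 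The only cosmetic difference is that you compute the Koszul homology over $\mathbb{Z}$ by a Smith-normal-form change of basis rather than the paper's induction with the universal coefficient theorem, and, like the paper, you ultimately defer the two nontrivial inputs (the regular-sequence presentation of $I_{|n|-\textsf{h}^\vee}$ and the collapse/absence of extension problems) to \cite{Bra, D}.
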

	Let us briefly recall the proof of Theorem \ref{nonequiv} in \cite{Bra} and suitably generalize the argument for the case where $n<\textsf{h}^\vee$. By rewriting $K^*(G, \tau^n)$ as the equivariant $K$-theory $K_G^*(G_\text{Ad}\times G_\text{L}, \pi_1^*\tau^n\otimes 1)$ (where $G_\text{Ad}$ is $G$ equipped with the action of conjugation by itself and $G_\text{L}$ is $G$ equipped with the action of left multiplication by itself), we can apply the K\"unneth spectral sequence, which is shown in \cite{Bra} to collapse on the $E_2$-page $\text{Tor}_{R(G)}^*(K_G^*(G_\text{Ad}, \tau^n), K_G^*(G_\text{L}))=\text{Tor}_{R(G)}^*(R_{|n|-\textsf{h}^\vee}(G), \mathbb{Z})$ and have no extension problem. By a commutative algebra argument (see \cite[\S 3.4.2]{Bra}), there exist $y_1^{(n)}, \cdots, y_r^{(n)}\in I_{|n|-\textsf{h}^\vee}$ which generate $I_{|n|-\textsf{h}^\vee}$ and form a regular sequence in $R(G)$. Thus $R_k(G)$ admits a free $R(G)$-resolution by the Koszul complex $\mathcal{K}(y_1^{(n)}, \cdots, y_r^{(n)})$, which can be realized as the tensor product of complexes $\displaystyle\bigotimes_{i=1}^r\left(0\longrightarrow R(G)\stackrel{\cdot y_i^{(n)}}{\longrightarrow}R(G)\longrightarrow0\right)$. By the definition of Tor, we have 
	\begin{align*}
		\text{Tor}_{R(G)}^*(R_k(G), \mathbb{Z})&=H_*\left(\bigotimes_{i=1}^r\left(0\longrightarrow R(G)\stackrel{\cdot y_i^{(n)}}{\longrightarrow}R(G)\longrightarrow0\right)\otimes_{R(G)}\mathbb{Z}\right)\\
										&=H_*\left(\bigotimes_{i=1}^r \left(0\longrightarrow \mathbb{Z}\stackrel{\cdot \text{dim }y_i^{(n)}}{\longrightarrow}\mathbb{Z}\longrightarrow 0\right)\right).
	\end{align*}
	It can be shown by induction on $j$ that $\displaystyle H_*\left(\bigotimes_{i=1}^j \left(0\longrightarrow \mathbb{Z}\stackrel{\cdot \text{dim }y_i^{(n)}}{\longrightarrow}\mathbb{Z}\longrightarrow 0\right)\right)\cong \bigwedge\nolimits^*_{\mathbb{Z}/x_j}(\eta_1, \cdots, \eta_{j-1})$, where $x_j:=\text{gcd}(\text{dim }y_1^{(n)}, \cdots, \text{dim }y_j^{(n)})$. The base case $j=1$ is easy. For the inductive step, we apply the universal coefficient theorem to $\displaystyle H_*\left(\bigotimes_{i=1}^{j+1}\left(0\longrightarrow \mathbb{Z}\stackrel{\cdot\text{dim }y_i^{(n)}}{\longrightarrow}\mathbb{Z}\longrightarrow 0\right)\right)$ and get 
	\begin{align*}
		&H_*\left(\bigotimes_{i=1}^j\left(0\longrightarrow \mathbb{Z}\stackrel{\cdot\text{dim }y_i^{(n)}}{\longrightarrow}\mathbb{Z}\longrightarrow 0\right)\right)\otimes H_*\left(0\longrightarrow\mathbb{Z}\stackrel{\cdot\text{dim }y_{j+1}^{(n)}}{\longrightarrow}\mathbb{Z}\longrightarrow 0\right)\oplus\\
		&\text{Tor}_\mathbb{Z} ^1\left(H_*\left(\bigotimes_{i=1}^j\left(0\longrightarrow\mathbb{Z}\stackrel{\cdot\text{dim }y_i^{(n)}}{\longrightarrow}\mathbb{Z}\longrightarrow 0\right)\right), H_*\left(0\longrightarrow\mathbb{Z}\stackrel{\cdot\text{dim }y_{j+1}^{(n)}}{\longrightarrow}\mathbb{Z}\longrightarrow 0\right)\right)\\
		\cong&\bigwedge\nolimits_{\mathbb{Z}/x_j}^*(\eta_1, \cdots, \eta_{j-1})\otimes \text{coker}\left(\mathbb{Z}\stackrel{\cdot\text{dim }y_{j+1}^{(n)}}{\longrightarrow}\mathbb{Z}\right)\oplus\\
		&\text{ker}\left(\bigwedge\nolimits_\mathbb{Z}^*(\eta_1, \cdots, \eta_{j-1})\otimes\mathbb{Z}/\text{dim }y_{j+1}^{(n)}\stackrel{\cdot x_j\otimes\text{Id}_{\mathbb{Z}/{\text{dim }y_{j+1}^{(n)}}}}{\longrightarrow}\bigwedge\nolimits_\mathbb{Z}^*(\eta_1, \cdots, \eta_{j-1})\otimes\mathbb{Z}/{\text{dim }y_{j+1}^{(n)}}\right)\\
		\cong&\bigwedge\nolimits_{\mathbb{Z}/{x_j}}^*(\eta_1, \cdots, \eta_{j-1})\otimes\mathbb{Z}/{\text{dim }y_{j+1}^{(n)}}\oplus\bigwedge\nolimits^*_\mathbb{Z}(\eta_1, \cdots, \eta_{j-1})\otimes\left\langle\frac{\text{dim }y_{j+1}^{(n)}}{\text{gcd}(\text{dim }y_{j+1}^{(n)}, x_j)}\right\rangle\eta_j\\
		&(\text{Note that }\cdot x_j\otimes\text{Id}_{\mathbb{Z}/{\text{dim }y_{j+1}^{(n)}}}=\text{Id}\otimes\cdot x_j|_{\mathbb{Z}/{\text{dim }y_{j+1}^{(n)}}},\ \text{and the generator of }\\
		&\text{ker}(\mathbb{Z}/{\text{dim }y_{j+1}^{(n)}}\stackrel{\cdot x_j}{\longrightarrow}\mathbb{Z}/{\text{dim }y_{j+1}^{(n)}})\text{ is}\frac{\text{dim }y_{j+1}^{(n)}}{\text{gcd}(y_{j+1}^{(n)}, x_j)})\\
		\cong&\bigwedge\nolimits^*_{\mathbb{Z}/{x_{j+1}}}(\eta_1, \cdots, \eta_{j-1})\oplus\bigwedge\nolimits_{\mathbb{Z}}(\eta_1, \cdots, \eta_{j-1})\otimes\mathbb{Z}/{x_{j+1}}\cdot\eta_j\\
		\cong&\bigwedge\nolimits^*_{\mathbb{Z}/{x_{j+1}}}(\eta_1, \cdots, \eta_j).
	\end{align*}
	The above computation shows that in fact $c(G, n)=\text{gcd}(\text{dim }y_1^{(n)}, \cdots, \text{dim }y_r^{(n)})$, which is the positive number generating of the image of the augmentation homomorphism $I_{|n|-\textsf{h}^\vee}\to\mathbb{Z}$ and thus the order of the cokernel of the augmentation map. Moreover, each new generator $\eta_j$ comes from the first copy of $\mathbb{Z}/{\text{dim }y_{j+1}^{(n)}}$ in the sequence $\displaystyle 0\longrightarrow\mathbb{Z}/{\text{dim }y_{j+1}^{(n)}}\stackrel{\cdot x_j}{\longrightarrow}\mathbb{Z}/{\text{dim }y_{j+1}^{(n)}}\longrightarrow 0$ as the generator of the kernel of the multiplication by $x_j$ map and its degree is $\text{dim }G+1$ (the second copy of $\mathbb{Z}/{\text{dim }y_{j+1}^{(n)}}$ originates from $K^{\text{dim }G}(G, \tau^n)$ and thus its degree is $\text{dim }G$). The coefficient group $\mathbb{Z}/c(G, n)$ of $\displaystyle\bigwedge\nolimits^*_{\mathbb{Z}/c(G, n)}(\eta_1, \cdots, \eta_{r-1})$ (that is, the degree zero part of the exterior algebra) corresponds to $\displaystyle\bigotimes_{i=1}^r\text{coker}\left(\mathbb{Z}\stackrel{\cdot\text{dim }y_i^{(n)}}{\longrightarrow}\mathbb{Z}\right)$ in $\text{Tor}_{R(G)}^*(R_{|n|-\textsf{h}^\vee}(G), \mathbb{Z})$. We shall remark that the set of algebra generators $\{\eta_1, \cdots, \eta_{r-1}\}$ depends on the choice of the generators $y_1^{(n)}, \cdots, y_r^{(n)}$ of $I_{|n|-\textsf{h}^\vee}$ which form a regular sequence. 
	\begin{proposition}\label{forgetful}
		Let $|n|\geq \textsf{h}^\vee$. The image of the forgetful map $\displaystyle h: K_G^*(G, \tau^n)\to K^*(G, \tau^n)\cong\bigwedge\nolimits^*_{\mathbb{Z}/c(G, n)}(\eta_1, \cdots, \eta_{r-1})$ is the coefficient group $\mathbb{Z}/c(G, n)$, and
		\[h(W_\mu)=\text{dim }V_\mu\ (\text{mod }c(G, n)).\]
	\end{proposition}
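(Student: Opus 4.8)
The plan is to recognize the forgetful map $h$ as an edge homomorphism of the very K\"unneth spectral sequence used to prove Theorem~\ref{nonequiv}, and then read off both assertions from its $\operatorname{Tor}_0$-term. Concretely, rewriting $K^*(G, \tau^n)\cong K_G^*(G_\text{Ad}\times G_\text{L}, \pi_1^*\tau^n\otimes 1)$ as in \cite{Bra}, the forgetful map $h\colon K_G^*(G_\text{Ad}, \tau^n)\to K^*(G, \tau^n)$ is precisely the pullback $\pi_1^*$ along the projection $\pi_1\colon G_\text{Ad}\times G_\text{L}\to G_\text{Ad}$, composed with the canonical isomorphism $K_G^*(G_\text{Ad}\times G_\text{L}, \pi_1^*\tau^n)\cong K^*(G, \tau^n)$ coming from the free quotient by the $G_\text{L}$-factor: one checks that $\pi_1^*F$ descends along $(G_\text{Ad}\times G_\text{L})/G\xrightarrow{\sim}G$, $[g_1, g_2]\mapsto g_2^{-1}g_1 g_2$, to $F$ with its equivariance forgotten, and that $\pi_1^*\tau^n$ descends to $\tau^n$. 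Now $\pi_1^*(x)$ is the external product of $x$ with the unit of $K_G^*(G_\text{L})\cong\mathbb{Z}$, so $\pi_1^*$ factors as the natural map $K_G^*(G_\text{Ad}, \tau^n)\to K_G^*(G_\text{Ad}, \tau^n)\otimes_{R(G)}K_G^*(G_\text{L}) = R_{|n|-\textsf{h}^\vee}(G)\otimes_{R(G)}\mathbb{Z}$ followed by the K\"unneth edge homomorphism $\operatorname{Tor}_0^{R(G)}(R_{|n|-\textsf{h}^\vee}(G), \mathbb{Z})\to K^*(G, \tau^n)$. Because that spectral sequence collapses at $E_2$ with no extension problem (as recalled after Theorem~\ref{nonequiv}), this edge homomorphism is injective and its image is exactly the coefficient group $\mathbb{Z}_{c(G, n)}$ — the summand $\bigotimes_{i=1}^r\operatorname{coker}(\mathbb{Z}\xrightarrow{\cdot\,\operatorname{dim}y_i^{(n)}}\mathbb{Z})$, i.e.\ the degree-zero part of the exterior algebra.

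It then remains to compute the natural map $q\colon R_{|n|-\textsf{h}^\vee}(G)\to R_{|n|-\textsf{h}^\vee}(G)\otimes_{R(G)}\mathbb{Z}$, where $\mathbb{Z}$ is an $R(G)$-module via the augmentation $\epsilon = \operatorname{dim}$. Since $R_{|n|-\textsf{h}^\vee}(G) = R(G)/I_{|n|-\textsf{h}^\vee}$, we get $R_{|n|-\textsf{h}^\vee}(G)\otimes_{R(G)}\mathbb{Z}\cong\mathbb{Z}/\epsilon(I_{|n|-\textsf{h}^\vee})\mathbb{Z} = \mathbb{Z}_{c(G, n)}$ with $1\otimes 1$ the generator $1$; and $W_\mu\otimes 1 = V_\mu\cdot(1\otimes 1) = 1\otimes\epsilon(V_\mu) = (\operatorname{dim}V_\mu)(1\otimes 1)$, so $q(W_\mu) = \operatorname{dim}V_\mu\ (\operatorname{mod}c(G, n))$. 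Composing with the injective edge homomorphism yields $h(W_\mu) = \operatorname{dim}V_\mu\ (\operatorname{mod}c(G, n))$. Finally, because the classes $W_\mu$ with $\mu\in\Lambda^*_{|n|-\textsf{h}^\vee}$ form a $\mathbb{Z}$-basis of $K_G^*(G, \tau^n)$ and $h$ is additive, the image of $h$ is the subgroup of $\mathbb{Z}_{c(G, n)}$ generated by the numbers $\operatorname{dim}V_\mu$; taking $\mu = 0$ gives $\operatorname{dim}V_0 = 1$, so that image is all of $\mathbb{Z}_{c(G, n)}$.

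The step I expect to require the most care is not a calculation but the compatibility bookkeeping in the first paragraph: verifying that the forgetful map genuinely agrees with $\pi_1^*$ under the chosen isomorphism $K_G^*(G_\text{Ad}\times G_\text{L}, \pi_1^*\tau^n)\cong K^*(G, \tau^n)$ (twists included), and that $\pi_1^*$ is the K\"unneth edge map onto $\operatorname{Tor}_0$. A more elementary alternative, avoiding spectral-sequence language, is to observe that $h$ is $R(G)$-linear with $R(G)$ acting on $K^*(G, \tau^n)$ through $\epsilon$, hence $h$ kills $\ker(\epsilon)\cdot K_G^*(G, \tau^n)$ and factors through $R_{|n|-\textsf{h}^\vee}(G)\otimes_{R(G)}\mathbb{Z} = \mathbb{Z}_{c(G, n)}$; one then identifies the Pontryagin unit $f_{G,*}^{\mathrm{nonequiv}}(1) = h(W_0)$ of $K^*(G, \tau^n)$ with the generator $1$ of the coefficient group under the ring isomorphism of Theorem~\ref{nonequiv}, which forces the factored map to be injective with image the coefficient group and again delivers both claims.
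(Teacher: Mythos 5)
Your proposal is correct and is essentially the paper's own argument: the paper likewise works through the K\"unneth spectral sequence, realizing $K_G^*(G,\tau^n)$ as $\operatorname{Tor}_{R(G)}(R_{|n|-\textsf{h}^\vee}(G), R(G))$ and observing that forgetting equivariance amounts to applying $\cdot\otimes_{R(G)}\mathbb{Z}$ (the augmentation), which lands exactly in the degree-zero part $\mathbb{Z}_{c(G,n)}$ and sends $W_\mu$ to $\dim V_\mu$ mod $c(G,n)$. Your first paragraph merely spells out, via $\pi_1^*$ and the edge homomorphism for $G_{\text{Ad}}\times G_{\text{L}}$, the identification that the paper asserts more briefly, so the two proofs coincide in substance.
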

	\begin{proof}
		The equivariant twisted $K$-theory $K^*_G(G, \tau^n)$ can be realized as $\text{Tor}_{R(G)}(R_{|n|-\textsf{h}^\vee}, R(G))$, the $E_2$-page of the K\"unneth spectral sequence applied to $K_G^*(G_\text{Ad}\times\text{pt}, \tau^n\otimes 1)$. Using the Koszul complex and the universal coefficient theorem as in the previous discussion, we can see that $\displaystyle \text{Tor}_{R(G)}^*(R_{|n|-\textsf{h}^\vee}(G), R(G))=\bigotimes_{i=1}^r\text{coker}\left(R(G)\stackrel{\cdot y_i^{(n)}}{\longrightarrow}R(G)\right)$, which after applying the forgetful functor (amounting to taking $\cdot\otimes_{R(G)}\mathbb{Z}$ or equivalently the augmentation map) becomes $\displaystyle\bigotimes_{i=1}^r\text{coker}\left(\mathbb{Z}\stackrel{\cdot\text{dim }y_i^{(n)}}{\longrightarrow}\mathbb{Z}\right)$. The latter corresponds to the coefficient group $\mathbb{Z}/c(G, n)$ as asserted in the previous discussion. This completes the proof.
	\end{proof}
	\begin{remark}
		Recall that the untwisted equivariant $K$-theory $K_G^*(G)$ is isomorphic to $\Omega^*_{R(G)/\mathbb{Z}}$, the Grothendieck ring of K\"ahler differentials of $R(G)$ (cf. \cite{BZ}), while the classical result of Hodgkin asserts that the untwisted, nonequivariant $K$-theory $K^*(G)$ is the exterior algebra over $\mathbb{Z}$ generated by differentials of $R(G)$ (cf. \cite{Ho}). As the $G$-action on itself by conjugation is equivariantly formal, the forgetful map $K_G^*(G)\to K^*(G)$ is onto (cf. \cite{F3}). However, because of the presence of twists, the forgetful map $h$ for twisted $K$-theory is far from being onto by Proposition \ref{forgetful}.
	\end{remark}
	It remains to deal with the case where $|n|<\textsf{h}^\vee$ and $n\neq 0$, but this is easy: as mentioned in the proof of Corollary \ref{zerok}, $K_G^*(G, \tau^n)=0$ for $|n|<\textsf{h}^\vee$ and $n\neq 0$, and thus the $E_2$-page of the K\"unneth spectral sequence is 
	\begin{align*}
		\text{Tor}_{R(G)}^*(K^*_G(G, \tau^n), \mathbb{Z})&=\text{Tor}_{R(G)}^*(0, \mathbb{Z})\\
											&=0.
	\end{align*}
	So we have
	\begin{proposition}\label{noneqvanishing}
		If $|n|<\textsf{h}^\vee$ and $n\neq 0$, then $K^*(G, \tau^n)=0$.
	\end{proposition}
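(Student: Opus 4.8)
The statement is essentially a corollary of the machinery already set up for Theorem~\ref{nonequiv}, so the plan is short. First I would rewrite the nonequivariant twisted $K$-theory as an equivariant one, $K^*(G, \tau^n) \cong K_G^*(G_\text{Ad} \times G_\text{L}, \pi_1^*\tau^n \otimes 1)$, where $G_\text{L}$ is $G$ equipped with the (free and transitive) left-translation action. Then I would feed this into the equivariant K\"unneth spectral sequence of \cite{Bra}, whose $E_2$-page is $\text{Tor}_{R(G)}^*\left(K_G^*(G_\text{Ad}, \tau^n),\, K_G^*(G_\text{L})\right)$. Since the left action is free, $K_G^*(G_\text{L}) \cong K^*(\text{pt}) \cong \mathbb{Z}$ with $R(G)$ acting through the augmentation, so this $E_2$-page is $\text{Tor}_{R(G)}^*\left(K_G^*(G, \tau^n),\, \mathbb{Z}\right)$.

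The second ingredient is the vanishing of the equivariant twisted $K$-theory in this range, i.e.\ $K_G^*(G, \tau^n) = 0$ for $|n| < \textsf{h}^\vee$ and $n \neq 0$. For $0 < n < \textsf{h}^\vee$ this follows directly from the Freed--Hopkins--Teleman Theorem~\ref{FHT1}, which identifies $K_G^{\text{dim }G}(G, \tau^n)$ with the level $n - \textsf{h}^\vee$ Verlinde algebra $R_{n-\textsf{h}^\vee}(G)$, together with the fact that $R_k(G) = 0$ for $1 - \textsf{h}^\vee \leq k \leq -1$; for $-\textsf{h}^\vee < n < 0$ the same vanishing follows by combining this with Corollary~\ref{zerok}, which gives $K_G^*(G, \tau^n) \cong R_{|n|-\textsf{h}^\vee}(G)$. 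Hence the $E_2$-page equals $\text{Tor}_{R(G)}^*(0, \mathbb{Z}) = 0$.

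Finally, a spectral sequence with vanishing $E_2$-page converges to zero, so $K^*(G, \tau^n) = 0$, which completes the argument. There is no genuine obstacle here; the only point deserving a little care is the validity of the first reduction — that the twist $\pi_1^*\tau^n \otimes 1$ on $G_\text{Ad} \times G_\text{L}$ is the correct object and that the K\"unneth spectral sequence of \cite{Bra} applies to it — but this is precisely the input already invoked when recalling the proof of Theorem~\ref{nonequiv}, so it may simply be cited.
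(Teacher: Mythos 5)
Your proposal is correct and follows the paper's own argument: rewrite $K^*(G, \tau^n)$ as $K_G^*(G_\text{Ad}\times G_\text{L}, \pi_1^*\tau^n\otimes 1)$, apply the K\"unneth spectral sequence, and observe that the $E_2$-page $\text{Tor}_{R(G)}^*(K_G^*(G, \tau^n), \mathbb{Z})=\text{Tor}_{R(G)}^*(0, \mathbb{Z})=0$ by the vanishing of the equivariant twisted $K$-theory in this range (via Theorem \ref{FHT1}, the vanishing of negative-level Verlinde algebras, and Corollary \ref{zerok}). Your justification of the equivariant vanishing is spelled out slightly more explicitly than in the paper, but the route is the same.
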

\section{Adams operations: the nonequivariant twisted case}\label{nonequivadams}
The Adams operation on $K^*(G, \tau^n)$ for $|n|<\textsf{h}^\vee$ is automatically trivial by Proposition \ref{noneqvanishing}. In this section, unless otherwise specified, we assume that $|n|\geq \textsf{h}^\vee$.
\begin{theorem}[=Theorem \ref{adamsnonequiv}]
	Let $k$ be an element in the coefficient group $\mathbb{Z}/c(G, n)$ of $K^*(G, \tau^n)$. Then 
	\[\psi^\ell(k)=\begin{cases}\ell^{|R_+|}k,&\ \text{if }\ell\geq0\\ (-1)^{\text{sgn}(w_0)+|R_+|}\ell^{|R_+|}k,&\ \text{if }\ell<0\end{cases}\ (\text{mod }c(G, \ell n))\]
	in the coefficient group.
\end{theorem}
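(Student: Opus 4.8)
The plan is to transfer the computation from the equivariant setting of Theorem \ref{adamsequiv} via the forgetful map. Write $h\colon K_G^*(G,\tau^m)\to K^*(G,\tau^m)$ for the forgetful map attached to any twist. By Proposition \ref{forgetful}, $h$ surjects onto the coefficient group $\mathbb{Z}_{c(G,m)}$ and satisfies $h(W_\mu)=\dim V_\mu\pmod{c(G,m)}$ for every dominant weight $\mu$; in particular $h(W_0)=\dim V_0=1$, so $W_0$ maps to a generator of $\mathbb{Z}_{c(G,n)}$. Since Adams operations are natural with respect to restriction of the group action, $h\circ\psi^\ell=\psi^\ell\circ h$, where on the left $h$ is the forgetful map for $\tau^n$ and on the right it is the one for $\tau^{\ell n}$ (note $|\ell n|\geq|n|\geq\textsf{h}^\vee$ whenever $\ell\neq 0$, so Theorem \ref{nonequiv} applies on both sides). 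Because $\psi^\ell$ is additive — indeed $\mathbb{Z}$-linear by Equation (\ref{adamsmodule}) — for any $k\in\mathbb{Z}_{c(G,n)}$ we get $\psi^\ell(k)=\psi^\ell\bigl(k\cdot h(W_0)\bigr)=k\cdot h\bigl(\psi^\ell(W_0)\bigr)$, so it suffices to evaluate $\psi^\ell(W_0)$ and reduce.

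For $\ell>0$, Theorem \ref{adamsequiv} with $\mu=0$ (a level $|n|-\textsf{h}^\vee$ weight) gives $\psi^\ell(W_0)=W_{(\ell-1)\rho}$, and applying $h$ together with Proposition \ref{forgetful} yields
\[\psi^\ell(k)=k\cdot\dim V_{(\ell-1)\rho}\pmod{c(G,\ell n)}.\]
Evaluating the Weyl dimension formula at $\mu=(\ell-1)\rho$, where $\mu+\rho=\ell\rho$, each factor in the product over $R_+$ equals $\ell$, so $\dim V_{(\ell-1)\rho}=\ell^{|R_+|}$, which is the claimed formula. The case $\ell=0$ is immediate: $\psi^0(W_0)=0$ by Theorem \ref{adamsequiv}, so $\psi^0(k)=0=\ell^{|R_+|}k$ since $|R_+|\geq 1$.

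For $\ell<0$, Theorem \ref{adamsequiv} with $\mu=0$ (so $\mu^*=-w_0\cdot 0=0$) gives $\psi^\ell(W_0)=(-1)^{\text{sgn}(w_0)}W_{(-\ell-1)\rho}$. Applying $h$, Proposition \ref{forgetful}, and then the Weyl dimension formula at $\mu=(-\ell-1)\rho$, where $\mu+\rho=-\ell\rho$, we obtain $\dim V_{(-\ell-1)\rho}=(-\ell)^{|R_+|}=(-1)^{|R_+|}\ell^{|R_+|}$, hence
\[\psi^\ell(k)=(-1)^{\text{sgn}(w_0)}\,k\,(-1)^{|R_+|}\ell^{|R_+|}=(-1)^{\text{sgn}(w_0)+|R_+|}\ell^{|R_+|}k\pmod{c(G,\ell n)}.\]

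This argument is largely an assembly of tools already in place. The two points that deserve care are: the naturality of Adams operations under the forgetful functor in the twisted setting, which is what makes $h$ intertwine the equivariant $\psi^\ell$ of Theorem \ref{adamsequiv} with its nonequivariant counterpart; and the observation that $W_{(\ell-1)\rho}$ and $W_{(-\ell-1)\rho}$ need not be indexed by level $|\ell n|-\textsf{h}^\vee$ weights, so one should note that Proposition \ref{forgetful} still applies to them because $W_\mu=f_{G,*}(V_\mu)$ is defined for every dominant $\mu$ and $h\circ f_{G,*}$ is the augmentation $R(G)\to\mathbb{Z}$ followed by reduction modulo $c(G,\ell n)$. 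Neither of these is a serious obstacle.
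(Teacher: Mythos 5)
Your proposal is correct and follows essentially the same route as the paper: transfer via the forgetful map $h$ using its naturality with respect to $\psi^\ell$, evaluate $\psi^\ell(W_0)$ with Theorem \ref{adamsequiv}, and reduce through Proposition \ref{forgetful} and the Weyl dimension formula at $\ell\rho$ (resp.\ $-\ell\rho$). The only cosmetic difference is that for $\ell<0$ you plug $\mu=0$ directly into the negative-$\ell$ formula of Theorem \ref{adamsequiv} instead of factoring $\psi^\ell=\psi^{-1}\circ\psi^{-\ell}$ as the paper does, which amounts to the same computation.
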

\begin{proof}
	First, let us recall the Weyl dimension formula, which computes the dimension of the irreducible representation $V_\mu$ of $G$: 
	\[\text{dim }V_\mu=\prod_{\alpha\in R_+}\frac{\langle\mu+\rho, \alpha\rangle}{\langle\rho, \alpha\rangle}.\]
	Here $\langle\cdot, \cdot\rangle$ is any $G$-invariant inner product on $\mathfrak{g}^*$, e.g. the basic inner product $B_G$. Note that the Adams operation $\psi^\ell$ commutes with $h$ due to functoriality. Let 1 be the multiplicative identity of $K^*(G, \tau^n)$. We have, for $\ell>0$, 
	\begin{align*}
		\psi^\ell(1)&=\psi ^\ell(h(W_0))\ (\text{by Proposition \ref{forgetful}})\\
				&=h(\psi^\ell(W_0))\\
				&=h(W_{(\ell-1)\rho})\ (\text{by Theorem \ref{adamsequiv}})\\
				&=\prod_{\alpha\in R_+}\frac{\langle\ell\rho, \alpha\rangle}{\langle\rho, \alpha\rangle}\ (\text{mod }c(G, \ell n))\ (\text{by the Weyl dimension formula and Proposition \ref{forgetful}})\\
				&=\ell ^{|R_+|}\ (\text{mod }c(G, \ell n)).
	\end{align*}
	For the case $\ell<0$, apply Theorem \ref{zerok} to the equation $\psi^{\ell}=\psi^{-1}\circ\psi^{-\ell}$ and use the previous case. For the case $\ell=0$, $\psi^0$ is the zero map since its equivariant counterpart is by Theorem \ref{adamsequiv}. The proposition follows by the fact that $\psi^\ell$ is a homomorphism.
\end{proof}
Finding out how Adams operations act on the (wedge product of) algebra generators $\eta_1, \cdots, \eta_{r-1}$ takes more work. This amounts to understanding the map $\text{Tor}_{R(G)}(R_{|n|-\textsf{h}^\vee}(G), \mathbb{Z})\to \text{Tor}_{R(G)}(R_{|\ell n|-\textsf{h}^\vee}(G, \mathbb{Z}))$ induced by the equivariant Adams operation $\psi^\ell: R_{|n|-\textsf{h}^\vee}(G)\to R_{|\ell n|-\textsf{h}^\vee}(G)$ given by Theorem \ref{adamsequiv}. We shall first consider the Koszul complexes for $K_G^*(G, \tau^n)\cong R_{|n|-\textsf{h}^\vee}(G)$ and $K_G^*(G, \tau^{\ell n})\cong R_{|\ell n|-\textsf{h}^\vee}(G)$ and figure out the map between them induced by the Adams operation $\psi^\ell$. 
\begin{eqnarray*}
	\xymatrix@+3pc{\cdots\ar[r]&\bigwedge\nolimits^2_{R(G)} (R(G)^{\oplus r})\ar[r]\ar[d]^{\widetilde{\psi}_2^\ell}& R(G)^{\oplus r}\ar[r]^{(y_1^{(n)}, \cdots, y_r^{(n)})}\ar[d]^{\widetilde{\psi}^\ell_1}& R(G)\ar[r]\ar[d]^{\widetilde{\psi}^\ell_0}& 0\\ \cdots\ar[r]&\bigwedge\nolimits^2_{R(G)} (R(G)^{\oplus r})\ar[r]& R(G)^{\oplus r}\ar[r]^{(y_1^{(\ell n)}, \cdots, y_r^{(\ell n)})}& R(G)\ar[r]&0}
\end{eqnarray*}
By Theorem \ref{adamsequiv}, $\psi^\ell$ is induced by the map $\widetilde{\psi}_1^\ell: R(G)\to R(G)$ in the zeroth slot of the complexes which sends $V_\mu$ to $V_{\ell\mu+(\ell-1)\rho}$. In general the map $\widetilde{\psi}^\ell_i: \bigwedge\nolimits_{R(G)}^i (R(G)^{\oplus r})\to \bigwedge\nolimits^i_{R(G)}(R(G)^{\oplus r})$ induced by $\psi^\ell$ in the $i$-th slot of the complexes can be found successively by commutativity of the diagram. For example, $\widetilde{\psi}_1^\ell(0, 0, \cdots, 0, \underbrace{1}_{i\text{-th entry}}, 0, \cdots, 0)$ is $(m_1^{(n, i)}, \cdots, m_r^{(n, i)})\in R(G)^{\oplus r}$ which satisfies 
\[\sum_{j=1}^r m_j^{(n, i)}y_j^{(n\ell)}=\widetilde{\psi}^\ell_0(y_i^{(n)}).\]
After obtaining the maps $\widetilde{\psi}_i^\ell$, we apply the functor $\cdot\otimes_{R(G)}\mathbb{Z}$ and from there the action of $\psi^\ell$ on the algebra generators can be computed. Let us illustrate this with an example. 
\begin{example}
	Let $G=SU(3)$. Without loss of generality we assume that $n\geq\textsf{h}^\vee=3$ because by Corollary \ref{zerok} $\psi^\ell$ is the zero map in the case $-3<n<3$ and the case $n\leq-3$ is similar. Take $y_1^{(n)}=V_{(n-2)L_1}$, $y_2^{(n)}=V_{(n-1)L_1}$ as the two generators of the Verlinde ideal $I_{n-3}$ (cf. \cite{G}). Assume that $\ell\geq 2$. Recall the Giambelli's formula, which expresses any irreducible representation of $SU(n)$ in terms of the fundamental representations: 
	\[V_{a_1L_1+a_2L_2+\cdots+a_{n-1}L_{n-1}}=\det(V_{(a_i+j-i)L_1})_{1\leq i, j\leq n-1}.\]
	By Giambelli's formula, We have 
	\begin{align*}
		V_{(\ell-1)L_1}\cdot y_1^{(\ell n)}-V_{(\ell-2)L_1}\cdot y_2^{(\ell n)}&=\widetilde{\psi}_0^\ell(y_1^{(n)})=V_{(\ell n-2)L_1+(\ell-1)L_2},\\
		-V_{(\ell-2)L_1+(\ell-2)L_2}\cdot y_1^{(\ell n)}+V_{(\ell-1)L_1+(\ell-1)L_2}\cdot y_2 ^{(\ell n)}&=\widetilde{\psi}_0 ^\ell(y_2^{(n)})=V_{(\ell n+\ell-2)L_1+(\ell-1)L_2}.
	\end{align*}
	On the other hand, after applying $\cdot\otimes_{R(G)}\mathbb{Z}$ to the above diagram, we have that $\displaystyle\text{dim }y_1^{(n)}=\frac{n(n-1)}{2},\ \text{dim }y_2^{(n)}=\frac{(n+1)n}{2},\ \text{dim }y_1^{(\ell n)}=\frac{\ell n(\ell n-1)}{2},\ \text{dim }y_2^{(\ell n)}=\frac{\ell n(\ell n+1)}{2}$, and the algebra generators $\eta_1^{(n)}\in K^*(G, \tau^n)$ and $\eta_1^{(\ell n)}\in K^*(G, \tau^{\ell n})$ are respectively represented by the generators of the kernel of the maps
	\[\mathbb{Z}^{\oplus 2}\stackrel{\left(\frac{n(n-1)}{2}, \frac{n(n+1)}{2}\right)}{\longrightarrow}\mathbb{Z}\text{ and }\mathbb{Z}^{\oplus 2}\stackrel{\left(\frac{\ell n(\ell n-1)}{2}, \frac{\ell n(\ell n+1)}{2}\right)}{\longrightarrow}\mathbb{Z},\]
	i.e., 
	\[\eta_1^{(n)}=\begin{cases}[(n+1, 1-n)],&\ \text{if }n\text{ is even}\\ \left[\left(\frac{n+1}{2}, \frac{1-n}{2}\right)\right],&\ \text{if }n\text{ is odd}\end{cases}, \text{ and}\] 
	\[\displaystyle\eta_1^{(\ell n)}=\begin{cases}[(\ell n+1, 1-\ell n)],&\ \text{if }\ell\text{ or }n\text{ is even}\\ \left[\left(\frac{\ell n+1}{2}, \frac{1-\ell n}{2}\right)\right],&\ \text{if }\ell\text{ and }n\text{ are odd}\end{cases}.\]
	Moreover, we have
	\begin{align*}
		&(\widetilde{\psi}_1^\ell\otimes_{R(G)}\text{Id}_\mathbb{Z})(n+1, 1-n)\\
		=&(n+1)(\text{dim }V_{(\ell-1)L_1}, -\text{dim }V_{(\ell-2)L_1})+(1-n)(-\text{dim }V_{(\ell-2)L_1+(\ell-2)L_2}, \text{dim }V_{(\ell-1)L_1+(\ell-1)L_2})\\
		=&(n+1)\left(\frac{(\ell+1)\ell}{2}, -\frac{\ell(\ell-1)}{2}\right)+(1-n)\left(-\frac{\ell(\ell-1)}{2}, \frac{\ell(\ell+1)}{2}\right)\\
		=&(\ell(\ell n+1), -\ell(\ell n-1)).
	\end{align*}
	Thus on the $K$-theoretic level, we obtain, for $\ell\geq 2$, 
	\[\psi^\ell (\eta_1^{(n)})=\begin{cases}\frac{\ell}{2}\eta_1^{(\ell n)},&\ \text{if }n\text{ is odd and }\ell\text{ is even},\\ \ell\eta_1^{(\ell n)},&\ \text{otherwise}.\end{cases}\]
	One can reprise the same recipe for the case $\ell<0$. Note that $y_1^{(\ell n)}=V_{(-\ell n-2)L_1}$, $y_2^{(\ell n)}=V_{(-\ell n-1)L_1}$, and $\displaystyle\eta_1^{(\ell n)}=\begin{cases}[(-\ell n+1, 1+\ell n)],&\ \text{if }\ell\text{ or }n\text{ is even}\\ \left[\left(\frac{-\ell n+1}{2}, \frac{1+\ell n}{2}\right)\right],&\ \text{if }\ell\text{ and }n\text{ are odd}\end{cases}.$ We also have, by Giambelli's formula and Theorem \ref{adamsequiv},
	\begin{align*}
		-V_{(-\ell(n-1)-1)L_1}\cdot y_1^{(\ell n)}+V_{(-\ell(n-1)-2)L_1}\cdot y_2 ^{(\ell n)}&=\widetilde{\psi}_0 ^\ell(y_1^{(n)})=-V_{(-\ell n-2)L_1+(-\ell(n-1)-1)L_2},\\
		V_{(-\ell(n+1)-1)L_1}\cdot y_1^{(\ell n)}-V_{(-\ell(n+1)-2)L_1}\cdot y_2 ^{(\ell n)}&=\widetilde{\psi}_0 ^\ell(y_2^{(n)})=-V_{(-\ell(n+1)-2)L_1+(\ell n-1)L_2}.
	\end{align*}
	It follows that
	\begin{align*}
		&(\widetilde{\psi}_1^\ell\otimes_{R(G)}\text{Id}_\mathbb{Z})(n+1, 1-n)\\
		=&(n+1)(-\text{dim }V_{(-\ell(n-1)-1)L_1}, \text{dim }V_{(-\ell(n-1)-2)L_1})+\\
		&(1-n)(\text{dim }V_{(-\ell(n+1)-1)L_1}, -\text{dim }V_{(-\ell(n+1)-2)L_1})\\
		=&(n+1)\left(-\frac{(-\ell(n-1)+1)(-\ell(n-1))}{2}, \frac{-\ell(n-1)(-\ell(n-1)-1)}{2}\right)+\\
		&(1-n)\left(\frac{(-\ell(n+1)+1)(-\ell(n+1))}{2}, -\frac{(-\ell(n+1))(-\ell(n+1)-1)}{2}\right)\\
		=&((1-\ell n)\ell(n^2-1), (1+\ell n)\ell(n^2-1)).
	\end{align*}
	So for $\ell<0$, we get
	\[\psi^\ell (\eta_1^{(n)})=\begin{cases}\frac{\ell(n^2-1)}{2}\eta_1^{(\ell n)},&\ \text{if }n\text{ is odd and }\ell\text{ is even},\\ \ell(n^2-1)\eta_1^{(\ell n)},&\ \text{otherwise}.\end{cases}\]
	Recall that, by \cite[Equation 3.20 and Table 1]{Bra}, 
	\[c(SU(3), \ell n)=\begin{cases}\ell n,&\ \text{if }\ell n\text{ is odd},\\ \frac{\ell n}{2},&\ \text{if }\ell n\text{ is even}. \end{cases}\]
	Then the above results for $\ell<0$ can be simplified as
		\[\psi^\ell (\eta_1^{(n)})=\begin{cases}-\frac{\ell}{2}\eta_1^{(\ell n)},&\ \text{if }n\text{ is odd and }\ell\text{ is even},\\ -\ell\eta_1^{(\ell n)},&\ \text{otherwise}.\end{cases}\]
\end{example}
%\appendix
%\section{An alternative proof of the ring structure of $K_G^*(G, \tau^{k+\textsf{h}^\vee})$}

\noindent\footnotesize{\textsc{NYU-ECNU Institute of Mathematical Sciences, \\
New York University Shanghai, \\
3663 Zhongshan Road North,\\
Shanghai 200062, China\\
\\
Xi'an Jiaotong-Liverpool University,\\
111 Ren’ai Road, Suzhou Industrial
Park,\\Suzhou, Jiangsu Province 215123, China}\\
\\
\textsc{E-mail}: \texttt{ChiKwong.Fok@xjtlu.edu.cn}\\
\textsc{URL}: \texttt{https://sites.google.com/site/alexckfok}
\end{document}